\newtheorem{theorem}{Theorem}
\newtheorem{lemma}{Lemma}
\newtheorem{remark}{Remark}
\newtheorem*{assumption}{Assumption}
\newcommand{\bx}{\boldsymbol{x}}
\newcommand{\by}{\boldsymbol{y}}
\newcommand{\bw}{\boldsymbol{w}}
\newcommand{\bt}{\boldsymbol{t}}
\newcommand{\bs}{\boldsymbol{s}}
\newcommand{\bzero}{\boldsymbol{0}}
\begin{document}

\title{Composite likelihood estimation for a gaussian process under fixed domain asymptotics} 

\author{
{ \normalsize Fran\c{c}ois Bachoc$^{*1}$, Moreno Bevilacqua$^2$, Daira Velandia$^3$ }\\
{ \normalsize $1$ Institut de Math\'ematiques de Toulouse, France.} \\
 { \normalsize $2$ Departamento de Estad\'istica, Universidad de Valpara\'iso } \\
{ \normalsize $3$ Millennium Nucleus  Center for the Discovery of Structures in Complex Data, Chile. } \\
{ \normalsize Departamento de Estad\'istica, Universidad de Valpara\'iso, Chile. } \\
{ \normalsize $*$ Corresponding author. Email address: \url{francois.bachoc@math.univ-toulouse.fr}  }
}

\maketitle

\begin{abstract}
We study the problem of estimating the covariance parameters of a one-dimensional Gaussian process with exponential covariance function under fixed-domain asymptotics.
We show that the weighted pairwise maximum likelihood
estimator of the microergodic parameter can be consistent or inconsistent. This depends on the range of admissible parameter values in the likelihood optimization. On the other hand, the weighted pairwise conditional maximum likelihood
estimator is always consistent. Both estimators are also asymptotically Gaussian when they are consistent. Their asymptotic variances are larger or strictly larger than that of the maximum likelihood estimator.
A simulation study is presented in order to compare the finite sample behavior of the pairwise likelihood estimators with their asymptotic distributions. For more general covariance functions, an additional inconsistency result is provided, for the weighted pairwise maximum likelihood estimator of a variance parameter. \\

{\bf Keywords. }
asymptotic normality;
consistency;
exponential model;
fixed-domain asymptotics;
Gaussian processes;
large data sets;
microergodic parameters;
pairwise composite likelihood.
MSC 2010. Primary 60G15, Secondary 62F10.
\end{abstract}

\section{Introduction\label{sec:1}}

Gaussian processes are widely used in statistics to model spatial data. When fitting a Gaussian field, one has to deal with the issue of estimating its covariance function. In many cases, it is assumed that this function belongs to a given parametric model or family of covariance functions, which turns the problem into a parametric estimation problem. Within this framework, the maximum likelihood estimator (MLE) 
\cite{Ste1999,Rasmussen2006} of the covariance parameters of a Gaussian stochastic process observed in $\mathbb{R}^d$, $d \geq 1$, has been deeply studied in the two following asymptotic frameworks. 

 The fixed-domain asymptotic framework, sometimes called infill asymptotics \cite{Ste1999, Cre1993}, corresponds to the case where more and more data are observed in some fixed bounded sampling domain (usually a region of $\mathbb{R}^d$). The increasing-domain asymptotic framework corresponds to the case where the sampling domain  increases with the number of observed data. 
Furthermore, the distance between any two sampling locations is bounded away from $0$. The asymptotic behavior of the MLE of the covariance parameters can be quite different under these two frameworks \cite{ZhaZim2005}. 

 Under  increasing-domain asymptotics, for all covariance parameters, the MLE is consistent and asymptotically normal under some mild regularity conditions. The asymptotic covariance matrix is equal to the inverse of the Fisher information matrix \cite{MarMar1984,shaby12tapered,Bac2014}.
 
  The situation is significantly different under fixed-domain asymptotics. Indeed, two types of covariance parameters can be distinguished: microergodic and non-microergodic parameters. A covariance parameter is microergodic if two Gaussian measures constructed from different values of it are orthogonal, see \cite{IbrRoz1978,Ste1999}. It is non-microergodic if two Gaussian measures constructed from different values of it are equivalent. Non-microergodic parameters cannot be estimated consistently, but misspecifying them asymptotically results in the same statistical inference as specifying them correctly \cite{AEPRFMCF,BELPUICF,UAOLPRFUISOS,Zha2004}. 
In the case of isotropic Mat\'ern covariance functions with $d \leq 3$, \cite{Zha2004} shows that only a reparametrized quantity obtained from the scale and variance parameters is microergodic. The asymptotic normality of the MLE of this microergodic parameter is then obtained in \cite{ShaKau2013}.
Similar results for the special case of the exponential covariance function were obtained previously in \cite{Yin1991}. 

The maximum likelihood method is generally considered the best option for estimating the covariance parameters of a Gaussian process (at least in the framework of the present paper, where the true covariance function does belong to the parametric model, see also \cite{Bachoc13cross,bachoc2018asymptotic}). Nevertheless, the evaluation of the likelihood function under the Gaussian assumption requires to solve a system of linear equations and to compute a determinant. For a data set of $n$ observations, the computational burden is $O(n^3)$. This makes maximum likelihood computationally impractical for large data sets. This fact motivates the search for estimation methods with a good balance between computational complexity and statistical efficiency. Among these methods, we can mention low rank approximation (see \cite{stein14limitations} and the references therein for a review), sparse approximation \cite{hensman2013}, covariance tapering \cite{furrer2006covariance,kaufman08covariance}, Gaussian Markov random fields approximation \cite{rue05gaussian,datta16hierarchical}, submodel aggregation \cite{hinton2002training,trespBCM,caoGPOE,deisenroth2015,van2015optimally,rulliere2018nested} and composite likelihood (CL).

CL indicates a general class of objective functions based on the likelihood of marginal or conditional events \cite{Varin:Reid:Firth:2011}. This kind of estimation method has two important benefits. First, it is generally appealing when dealing with large data sets. Second, it can be helpful when it is difficult to specify the full likelihood.
In this paper we focus on a specific type of CL called pairwise likelihood. Examples of the use of pairwise likelihood  in the Gaussian and non Gaussian case can be found in
\cite{Bevilacqua:Gaetan:2015,Bevilacqua:Gaetan:Mateu:Porcu:2012,Feng_et_al:2014,Guan:2006,Heagerty:Lele:1998} to name a few. 

The increasing-domain asymptotic properties of various weighted pairwise likelihood estimators in the Gaussian case 
can be found in \cite{Bevilacqua:Gaetan:2015}.
Under this setting, for all covariance parameters, these pairwise likelihood estimators are consistent and asymptotically normal under some mild regularity conditions. 

There are no results under fixed-domain asymptotics for CL to the best of our knowledge.
In this paper we study the fixed-domain asymptotic properties of the  pairwise likelihood estimation method. A one-dimensional Gaussian process with exponential covariance model is considered. This covariance model is particularly amenable to theoretical analysis of the MLE. Indeed, its Markovian
property yields an explicit (matrix-free) expression of the likelihood function \cite{Yin1991}. Thus, the MLE for this model has been studied in  \cite{CheSimYin2000,chang2017mixed,Yin1991}, and also in higher dimension in \cite{ying93maximum,vdV1996,AbtWel1998}. 

Under the parametric model given by the one-dimensional exponential covariance function, the microergodic parameter is the product of the variance and the scale (see \cite{Yin1991} and Section \ref{sec:estimators}). 
Under the same setting as in \cite{Yin1991}, we study the consistency and asymptotic normality of the weighted pairwise maximum likelihood estimator (WPMLE) and of the weighted pairwise conditional maximum likelihood estimator (WPCMLE) of the microergodic parameter. Our results show that the WPMLE is inconsistent if the range of admissible values for the variance parameter is too large compared to that for the scale parameter (Theorem \ref{thm:consistency:inconsistency:pl}, \rm(i) and \rm(iii)). Conversely, they show that the WPMLE is consistent if the range of admissible values for the variance parameter is restricted enough compared to that for the scale parameter (Theorem \ref{thm:consistency:inconsistency:pl}, (ii) and (iv)). In contrasts, we prove that the WPCMLE is consistent regardless of the ranges of admissible values for the scale and variance (Theorem \ref{thm:consistency:inconsistency:plc}). 
Both estimators are also asymptotically Gaussian in the cases where they are consistent (Theorem \ref{thm:normality}). The asymptotic variance of these estimators is no smaller than that of the MLE. Furthermore, it is strictly larger for equispaced observation points, when non-zero weights are given to pairs of non successive observation points. 

The proofs of the asymptotic results rely on Lemma \ref{lemma:for:PL:CL:to:lik}. This lemma expresses the  weighted pairwise (conditional) likelihood criteria as combinations of full likelihood criteria for subsets of the observations. This enables to exploit the asymptotic approximations obtained by \cite{Yin1991} for the full likelihood.
The consistency results are then obtained similarly as in \cite{Yin1991}.
The inconsistency results are obtained by a careful analysis of the limit weighted likelihood criterion, together with results on extrema of Gaussian processes (see the proof of Theorem \ref{thm:consistency:inconsistency:pl}).
Finally, the asymptotic normality results are obtained by means of a central limit theorem for weakly dependent variables, together with the use of Lemma \ref{lemma:for:PL:CL:to:lik} and of the approximations in \cite{Yin1991}.

It turns out that a suitable choice of the  symmetric weights used in the pairwise likelihood
is crucial for improving the asymptotic statistical efficiency   of the method. 
In particular, the choice of compactly supported weights 
with a specific  cut-off generally decreases the asymptotic variance.
As a special case, 
if the    pairwise
likelihood is constructed only from adjacent pairs, then the  asymptotic efficiency of the MLE is attained. 

These results are consistent with the  increasing-domain setting. Indeed, it has been shown, via simulation, that
weighting schemes  downweighting observations that are far apart in
time and/or in space,
are  preferable to the  pairwise
likelihood involving all possible pairs
\cite{VV2006,JJ2009,Bevilacqua:Gaetan:2015}.
A simulation study compares the finite sample properties of  the weighted pairwise likelihood estimators
with their  asymptotic distributions,
using the MLE as a benchmark.
Finally, we provide an additional inconsistency result for the WPMLE of a variance parameter, when the correlation function is known and allowed to be significantly more general than the exponential one.

The paper falls into the following parts. In Section \ref{sec:estimators}, pairwise (conditional) likelihood estimation is introduced.
In Section \ref{sec:cons}, the consistency and inconsistency results are provided. In Section \ref{sec:AN}, the asymptotic normality results are provided, together with the analysis of the asymptotic variance.
In Section \ref{sec:exp}, the numerical results are discussed.
Concluding remarks are given in Section \ref{sec:conc}. The proofs are postponed to \ref{appendix:proofs}. The more general inconsistency result of the WPMLE is provided in \ref{appendix:inconsistency}.

\section{Pairwise (conditional) likelihood estimation \label{sec:estimators}}

In the following, we will consider a stationary zero-mean Gaussian process $Z=\{ Z(s), s \in  [0,1]  \}$,
with covariance function given by, for $s,t \in \mathbb{R}$,  
\begin{equation}\label{exp:cov}
{\rm cov}\{Z(s),Z(t)\}=\sigma_0^2 e^{-\theta_0 |s-t|}
\end{equation}
for fixed $\theta_0 , \sigma_0^2 \in (0,\infty)$.

For $\theta , \sigma^2 \in (0,\infty)$, let $\psi = (\theta , \sigma^2)^{\top}$, call $\sigma^2$ the variance parameter and call $\theta$ the scale (inverse of correlation decay) parameter. Let $\psi_0 = (\theta_0 , \sigma_0^2)^{\top}$. Throughout this paper, $\theta$ and $\sigma^2$ denote candidate covariance parameters, at which likelihood or pairwise likelihood criteria can be evaluated (see \eqref{eq:pairwise:lik:expression} and \eqref{eq:pairwise:cond:lik:expression} below), while $ \theta_0$ and $\sigma_0^2$ are the true covariance parameters, that provide the true covariance function of $Z$ through \eqref{exp:cov}.

The Gaussian process $Z$ is  known as the stationary Ornstein-Uhlenbeck process. Let $\big( s_i^{(n)} \big)_{n \in \mathbb{N}, i \in \{1,\ldots,n\}}$ be a triangular array of observations points in $[0,1]$. For $n \in \mathbb{N}$, let $\{s_1,\ldots,s_n\} = \{s_1^{(n)},\ldots,s_n^{(n)}\}$ for simplicity and assume without loss of generality that $s_1 < \ldots < s_n$.
The observation vector can thus be written as  $Z_n=(Z(s_1),\ldots,  Z(s_n))^{\top}$.

For $s,t \in [0,1]$, we let 
\[
\ell_{ s,t }(\psi)
= 2 \ln (\sigma^{2})+ \ln (1-e^{-2\theta |s-t|})+
\frac{1}{\sigma^2} Z(s)^2
+
\frac{ \{ Z(t)- e^{-\theta | s-t | } Z(s)\}^2 }{\sigma^{2}(1-e^{-2\theta |s-t|})}  
\]
be the log-likelihood criterion associated to the bivariate random vector  $(Z(s),Z(t))^\top$. The quantity $\ell_{ s,t }(\psi)$ is $-2$ times the logarithm of the probability density function of the vector  $(Z(s),Z(t))^\top$, when considering that $Z$ has covariance function given by \eqref{exp:cov} with $\psi_0$ replaced by $\psi$.

We also let
\[
\ell_{ t | s }(\psi)
= \ln (\sigma^{2})+ \ln (1-e^{-2\theta |s-t|})
+
\frac{ \{ Z(t)- e^{-\theta | s-t | } Z(s) \}^2 }{\sigma^{2}(1-e^{-2\theta |s-t|})} 
\]
be the conditional log-likelihood criterion of $Z(t)$ given $Z(s)$. 
The quantity $\ell_{ t|s }(\psi)$ is $-2$ times the logarithm of the conditional probability density function of $Z(t)$  given $Z(s)$, when considering that $Z$ has covariance function given by \eqref{exp:cov} with $\psi_0$ replaced by $\psi$. 

We let $(w_{i,j}^{(n)})_{n \in \mathbb{N}, i,j \in \{1,\ldots,n \},i \neq j}$ be a triangular array of weights in $[0,\infty)$. We let $w_{i,j}^{(n)} = w_{i,j}$ for simplicity.
Then, define the weighted pairwise log-likelihood function ${p\ell}_n$ as
\begin{equation} \label{eq:pairwise:lik:expression}
{p \ell}_{n}(\psi) =  
\sum_{i=1}^{n-1} \sum_{j=i+1}^n
w_{i,j} \ell_{ s_i,s_j }(\psi)
\end{equation}
and the weighted pairwise conditional log-likelihood function ${pc\ell}_n$ as
\begin{equation}  \label{eq:pairwise:cond:lik:expression}
{pc\ell}_{n}(\psi) =  
\sum_{i=1}^{n} \sum_{\substack{j =1 \\ j \neq i}}^n
w_{i,j} \ell_{ s_j | s_i }(\psi).
\end{equation}
Finally, for $J \subset (0 , \infty)^2$, define the WPMLE  as
\begin{equation} \label{eq:def:wpmle}
\widehat{\psi}_{{p \ell}}=\underset{{\psi} \in J}{ \mathrm{argmin}}\,\,{p \ell}_{n} (\psi)
\end{equation}
and the WPCMLE as 
\begin{equation} \label{eq:def:wpcmle}
 \widehat{\psi}_{{pc \ell}}=\underset{{\psi} \in J}{\mathrm{argmin}}\,\,{pc\ell}_{n} (\psi).
\end{equation}

We remark that the WPMLE and WPCMLE are two distinct estimators, which will be shown in Section \ref{sec:cons} to have significantly different behaviors under fixed-domain asymptotics.

In the rest of the paper, we will consider the case where the weights $(w_{i,j}^{(n)})$ are so that for any $n \in \mathbb{N}$ and $i,j \in \{1,\ldots,n\}, i \neq j$, $w_{i,j} = w_{|i-j|}$ with $(w_{k})_{k \in \mathbb{N}}$ a fixed sequence of non-negative numbers, not all of which being zero. We let $K = \rm max \{ i \in \mathbb{N} ; w_i \neq 0 \}$, where we allow for $K= + \infty$ if $(w_k)_{k \in \mathbb{N}}$ has infinitely many non-zero elements. The quantity $K$ is fixed independently of $n$ and $K < + \infty$ if an only if the sequence $(w_k)_{k \in \mathbb{N}}$ has finitely many non-zero elements. We remark that $K \geq 1$ since $(w_k)_{k \in \mathbb{N}}$ is not the zero sequence.

In the following lemma, convenient expressions are given for the weighted pairwise log-likelihood and weighted pairwise conditional log-likelihood functions. 

\begin{lemma} \label{lemma:for:PL:CL:to:lik}
	
	Let, for $k\in \{1,\ldots,{n-1}\}$ and $a \in \{0,\ldots,k-1\}$, $x^{(k,a)}_{j} = s_{1+a+jk}$, for $j \in \mathbb{N}$ so that $1+a+jk \leq n$. Then, with $\lfloor x \rfloor$ the largest integer that is smaller or equal to $x$, and with the convention that $\sum_{i=a}^b r_i = 0$ if $b < a$,
	\begin{equation} \label{eq:pairwise:lik:to:lik}
	{p\ell}_{n}(\psi) =  
	\sum_{k=1}^{n-1}
	w_{k}
	\sum_{a=0}^{k-1}
	\sum_{j=0}^{ \lfloor \frac{n-1-a-k}{  k} \rfloor }
	\ell_{ x^{(k,a)}_{j} ,x^{(k,a)}_{j+1} }(\psi)
	\end{equation}
	and
	\begin{equation} \label{eq:pairwise:cond:lik:to:lik}
	{pc \ell}_{n}(\psi) =  
	\sum_{k=1}^{n-1}
	w_{k}
	\sum_{a=0}^{k-1}
	\sum_{j=0}^{ \lfloor \frac{n-1-a-k}{  k} \rfloor }
	\left\{
	\ell_{ x^{(k,a)}_{j+1} |x^{(k,a)}_{j} }(\psi)
	+ 
	\ell_{ x^{(k,a)}_{j} |x^{(k,a)}_{j+1} }(\psi)	 
	\right\}.
	\end{equation}
	
\end{lemma}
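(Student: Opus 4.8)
The plan is to prove both identities \eqref{eq:pairwise:lik:to:lik} and \eqref{eq:pairwise:cond:lik:to:lik} by a pure reindexing of the double sums in \eqref{eq:pairwise:lik:expression} and \eqref{eq:pairwise:cond:lik:expression}: the content of the lemma is that summing over all pairs $\{i,j\}$ with fixed gap $k=|i-j|$ is the same as partitioning $\{1,\dots,n\}$ into the $k$ arithmetic progressions of common difference $k$ (one for each residue class $a\in\{0,\dots,k-1\}$ of the starting index) and summing over consecutive elements within each progression.

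First I would start from \eqref{eq:pairwise:lik:expression}, insert $w_{i,j}=w_{|i-j|}$, and group the terms by the value $k=j-i\in\{1,\dots,n-1\}$ (using $i<j$), writing
\[
{p\ell}_n(\psi)=\sum_{k=1}^{n-1} w_k \sum_{\substack{i,j\in\{1,\dots,n\}\\ j-i=k}} \ell_{s_i,s_j}(\psi)
=\sum_{k=1}^{n-1} w_k \sum_{i=1}^{n-k} \ell_{s_i,s_{i+k}}(\psi).
\]
Then, for fixed $k$, I would split the inner index $i$ according to the residue $a\in\{0,\dots,k-1\}$ of $i-1$ modulo $k$, i.e. write $i=1+a+jk$ with $j\ge 0$; the constraint $i\le n-k$ becomes $1+a+jk\le n-k$, i.e. $j\le (n-1-a-k)/k$, which is exactly $j\in\{0,\dots,\lfloor (n-1-a-k)/k\rfloor\}$. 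Since $s_i=s_{1+a+jk}=x^{(k,a)}_j$ and $s_{i+k}=s_{1+a+(j+1)k}=x^{(k,a)}_{j+1}$ by definition of $x^{(k,a)}_\cdot$, each term $\ell_{s_i,s_{i+k}}(\psi)$ equals $\ell_{x^{(k,a)}_j,x^{(k,a)}_{j+1}}(\psi)$, giving \eqref{eq:pairwise:lik:to:lik}. The only point requiring a moment of care is to check that the residue-class decomposition is a genuine bijection onto $\{i:1\le i\le n-k\}$: every such $i$ has a unique representation $i-1=a+jk$ with $0\le a\le k-1$ and $j\ge 0$, and conversely every $(a,j)$ in the stated ranges yields a valid $i$; one should also note the convention $\sum_{j=0}^{b}=0$ when $b<0$ handles residue classes $a$ for which no admissible $j$ exists (e.g. when $a+k>n-1$).

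For the conditional version I would proceed in the same way, noting that in \eqref{eq:pairwise:cond:lik:expression} the sum is over \emph{ordered} pairs $(i,j)$ with $i\ne j$, and that grouping by $k=|i-j|$ pairs up the two orderings $(i,j)$ and $(j,i)$ of each unordered pair: for $i<j$ with $j-i=k$ these contribute $w_k\,\ell_{s_j|s_i}(\psi)+w_k\,\ell_{s_i|s_j}(\psi)$ (using $w_{i,j}=w_{j,i}=w_k$). Writing $i=1+a+jk$ as before, this is exactly the summand $\ell_{x^{(k,a)}_{j+1}|x^{(k,a)}_j}(\psi)+\ell_{x^{(k,a)}_j|x^{(k,a)}_{j+1}}(\psi)$ appearing in \eqref{eq:pairwise:cond:lik:to:lik}, and the range analysis for $(a,j)$ is identical. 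I do not anticipate a serious obstacle here — the lemma is essentially bookkeeping — so the main thing to get exactly right is the arithmetic of the summation bounds, namely verifying that $i\le n-k$ translates precisely into $0\le j\le\lfloor(n-1-a-k)/k\rfloor$ for each $a$, and confirming that the ranges over $(k,a,j)$ on the right-hand side cover each pair with the correct multiplicity (once, with coefficient $w_k$). Once the reindexing is checked on both sums, the two displayed equalities follow immediately.
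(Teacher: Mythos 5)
Your proposal is correct and follows essentially the same route as the paper: reindexing the pairs by the gap $k=j-i$ together with the remainder $a$ and quotient $j$ of the Euclidean division of $i-1$ by $k$, and for the conditional criterion first collapsing the ordered pairs via $\sum_{i\neq j} w_{i,j}\ell_{s_j|s_i}=\sum_{i<j}w_{i,j}\{\ell_{s_j|s_i}+\ell_{s_i|s_j}\}$. The bookkeeping of the summation bounds and the empty-sum convention is handled exactly as in the paper's proof.
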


The benefit of Lemma \ref{lemma:for:PL:CL:to:lik} is that the rightmost sum in \eqref{eq:pairwise:cond:lik:to:lik} can be expressed as a (full) log likelihood for a subset (that depends on $k$ and $a$ in \eqref{eq:pairwise:cond:lik:to:lik}) of $\{s_1,\ldots,s_n\}$ (see Lemma 1 in \cite{Yin1991}). Similarly, the rightmost sum in \eqref{eq:pairwise:lik:to:lik} can be expressed conveniently as a function of this (full) log likelihood. We refer to the proofs of Theorems \ref{thm:consistency:inconsistency:pl}, \ref{thm:consistency:inconsistency:plc} and \ref{thm:normality} for details.

\section{Consistency and inconsistency \label{sec:cons}}

It is well-known that, in the case of the exponential covariance model given in \eqref{exp:cov}, the parameters $\sigma^2$ and $\theta$ are non-microergodic and only the parameter $\sigma^2 \theta$ is microergodic \cite{Yin1991,Zha2004}. Hence, we study the consistency, inconsistency and asymptotic normality for the estimators $\hat{\theta}_{{p\ell}}\hat{\sigma}^{2}_{{p \ell}}$ and $\hat{\theta}_{{pc\ell}}\hat{\sigma}^{2}_{{pc\ell}}$ of $\sigma_0^2 \theta_0$ given in \eqref{eq:def:wpmle} and \eqref{eq:def:wpcmle}.

In the next theorem, let $J$ be of the form $[a,b] \times [c,d]$, $[a,b] \times (0 , \infty)$ or $(0 , \infty) \times [c,d]$. The next theorem characterizes, as a function of $J$, the cases where the WPMLE is consistent and those where it is inconsistent.

\begin{theorem} \label{thm:consistency:inconsistency:pl}
	Assume that $K < + \infty$, that $J  \subset (0,\infty)^2 $ and that there exists $(\tilde{\theta},\tilde{\sigma}^2)$ in $J$ so that $\tilde{\theta} \tilde{\sigma}^2 = \theta_0 \sigma_0^2$.
	Then, for fixed $0 < a \leq b < + \infty$ and $0 < c \leq d < + \infty$,
	\begin{itemize}
		\item[{\rm(i)}] If $J = [a,b] \times [ c , d ] $ with $ad > \theta_0 \sigma_0 ^2$ or $ bc < \theta_0 \sigma_0^2 $, then 
		$\hat{\theta}_{{p\ell}} \hat{\sigma}^{2}_{{p \ell}}$ does not converge in probability to $\theta_0 \sigma_0^2$;
		\item[{\rm(ii)}]
		If $J = [a,b] \times [ c , d ] $ with $ad \leq \theta_0 \sigma_0 ^2$ and $ bc \geq \theta_0 \sigma_0^2 $, then 
		$\hat{\theta}_{{p\ell}} \hat{\sigma}^{2}_{{p\ell}}$ converges  to $\theta_0 \sigma_0^2$ almost surely;
		\item[{\rm(iii)}]
		If $J = [a,b] \times (0 , \infty)$ then 
		$\hat{\theta}_{{p\ell}} \hat{\sigma}^{2}_{{p\ell}}$ does not converge in probability to $\theta_0 \sigma_0^2$;
		\item[{\rm(iv)}]
		If $J = (0 , \infty) \times  [c,d]$ then $\hat{\theta}_{{p\ell}} \hat{\sigma}^{2}_{{p\ell}}$ converges  to $\theta_0 \sigma_0^2$ almost surely.
	\end{itemize} 
\end{theorem}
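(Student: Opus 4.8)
The starting point is Lemma~\ref{lemma:for:PL:CL:to:lik}, which rewrites ${p\ell}_n(\psi)$ as a weighted sum, over lags $k \le K$ and offsets $a$, of chains of bivariate log-likelihoods $\sum_j \ell_{x^{(k,a)}_j, x^{(k,a)}_{j+1}}(\psi)$. As noted after the lemma, each such inner chain is (up to additive terms not depending on $\psi$, or up to a simple correction) a \emph{full} Gaussian log-likelihood for the sub-array $\{x^{(k,a)}_j\}_j$ of observation points. The key analytic input is then \cite{Yin1991}: for a full exponential-model likelihood on $m$ points of $[0,1]$, the profile likelihood in the microergodic parameter $\eta = \sigma^2\theta$ behaves, as $m \to \infty$, like $m\log(\hat\eta_{\mathrm{ML}}/\eta) + m(\eta/\hat\eta_{\mathrm{ML}} - 1) + o_P(m)$ uniformly in a suitable sense, where $\hat\eta_{\mathrm{ML}} \to \sigma_0^2\theta_0$ a.s.\ (in fact $\hat\eta_{\mathrm{ML}}$ is, to leading order, a fixed explicit functional of the data, namely the quadratic form $\tfrac{1}{m}\sum (\text{increments})^2/(\text{gaps})$). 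The first step, therefore, is to make this reduction precise: show that $\tfrac{1}{n}\,{p\ell}_n(\theta,\sigma^2)$ converges, uniformly over compact subsets of $(0,\infty)^2$, to a deterministic limit criterion of the form $c_1\big[\log(\sigma^2\theta) + \sigma_0^2\theta_0/(\sigma^2\theta)\big] + c_2\,g(\theta) + (\text{const})$, where $c_1 > 0$ collects the lag weights and the $g(\theta)$ term (coming from the $\ln(\sigma^2) + \ln(1 - e^{-2\theta|s-t|})$ pieces, which do \emph{not} organize themselves purely through $\sigma^2\theta$) is what breaks the microergodic structure present in the full likelihood.

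The crux of the argument is that, because of this extra $g(\theta)$ term, the minimizer of the limit criterion over $J$ need not lie on the curve $\{\sigma^2\theta = \sigma_0^2\theta_0\}$. For part~(iii), $J = [a,b]\times(0,\infty)$: I would argue that for fixed $\theta$, sending $\sigma^2 \to 0$ or $\sigma^2 \to \infty$ is controlled, but the profile over $\sigma^2$ yields $\hat\sigma^2(\theta) \asymp \sigma_0^2\theta_0/\theta \cdot(1 + (\text{bias from } g))$, so that $\hat\theta\hat\sigma^2$ concentrates on a value determined jointly by the $\theta$-range $[a,b]$ and the profiled-out $g$; by choosing to \emph{evaluate the limit criterion} rather than guessing the minimizer, one shows the minimizing $\theta$ sits at an endpoint $a$ or $b$ and the corresponding product differs from $\sigma_0^2\theta_0$ — hence inconsistency. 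Part~(iv), $J = (0,\infty)\times[c,d]$: here $\theta$ is free, so for any target value of the product one can solve $\sigma^2\theta = \sigma_0^2\theta_0$ with $\sigma^2 \in [c,d]$ only for $\theta$ in a sub-range; the point is that profiling over the \emph{unbounded} $\theta$ kills the $g$-distortion in the relevant direction — as $\theta$ ranges freely the criterion is minimized, to leading order, exactly on the microergodic curve, giving a.s.\ convergence of the product. Parts~(i)--(ii) interpolate: with $J = [a,b]\times[c,d]$, the microergodic curve meets the rectangle, and the limit criterion restricted to $J$ is minimized on that curve if and only if the rectangle is ``thin'' in the right sense, which is exactly the condition $ad \le \sigma_0^2\theta_0 \le bc$; when $ad > \sigma_0^2\theta_0$ or $bc < \sigma_0^2\theta_0$ one corner of the rectangle lies strictly on one side and the minimizer is pushed off the curve.

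**Key steps, in order.** (1) Apply Lemma~\ref{lemma:for:PL:CL:to:lik} and identify each inner chain with a full exponential-model log-likelihood on a sub-array. (2) Import from \cite{Yin1991} the uniform (in $\psi$ over compacts) leading-order expansion of each such full log-likelihood, together with $\hat\eta_{\mathrm{ML}} \to \sigma_0^2\theta_0$ a.s. (3) Sum over $k \le K$ and $a < k$ (finitely many terms since $K < \infty$), divide by $n$, and pass to the limit to get the deterministic criterion $L_\infty(\theta,\sigma^2) = c_1[\log(\sigma^2\theta) + \sigma_0^2\theta_0/(\sigma^2\theta)] + c_2 h(\theta) + \mathrm{const}$ with explicit $c_1, c_2 > 0$ and $h$; establish uniform a.s.\ convergence of $\tfrac1n {p\ell}_n$ to $L_\infty$ on compacts, and extend to the non-compact $J$ by an explicit coercivity/boundary check (controlling $\sigma^2 \to 0, \infty$ or $\theta \to 0, \infty$). (4) Minimize $L_\infty$ over $J$ in each of the four cases, reading off whether the argmin product equals $\sigma_0^2\theta_0$; for the inconsistency cases exhibit an explicit point of $J$ with strictly smaller limit criterion than any point on the microergodic curve. (5) Translate convergence of criteria into convergence (resp. non-convergence) of the estimator via a standard argmin/M-estimation argument, using uniqueness of the limiting minimizer where needed (cases ii, iv) and, for the inconsistency cases, the stronger statement that the limit minimizer lies a fixed distance from the curve together with the tools on extrema of Gaussian processes to rule out that fluctuations rescue consistency along subsequences.

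**Main obstacle.** The delicate part is step~(3)--(4): controlling the criterion uniformly over the \emph{unbounded} directions of $J$ (when $J$ has a $(0,\infty)$ factor), since $\ell_{s,t}(\psi)$ blows up as $\theta \to 0$ (the factor $1 - e^{-2\theta|s-t|} \to 0$) and as $\sigma^2 \to 0$, and one must show these blow-ups dominate — i.e.\ the estimator cannot escape to the boundary — using only the $O_P$/a.s.\ control inherited from \cite{Yin1991}, which is itself only asymptotic in the number of points. Equally subtle is pinning down the constant $c_2$ and the function $h$ precisely enough to verify the sign conditions in (i)--(iv): one needs that the $\ln(1-e^{-2\theta|s-t|})$ contributions, after summation over the sub-array gaps and normalization, survive in the limit and are not absorbed into the microergodic term — this is exactly the phenomenon that distinguishes the WPMLE from the full MLE, and getting its sign and magnitude right is what makes the four-way dichotomy come out as stated.
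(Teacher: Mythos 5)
There is a genuine gap, and it lies at the heart of your step (3): the normalized pairwise criterion does \emph{not} converge to a deterministic limit, and the term that breaks the microergodic structure is not a $\theta$-distortion coming from the $\ln(1-e^{-2\theta|s-t|})$ pieces. Under in-fill asymptotics those logarithmic terms combine with $\ln\sigma^2$ to give $\ln(\theta\sigma^2)$ plus $\psi$-free quantities, so they are entirely absorbed into the microergodic parameter (this is exactly what the conditional piece of the decomposition \eqref{eq:reindexing:pl} shows, via Yin's expansion, yielding the function $g(u)=\theta_0\sigma_0^2/u+\ln u$ evaluated at $u=\theta\sigma^2$). What distinguishes the WPMLE from the WPCMLE is the \emph{marginal} piece $\sum w_k\sum_{a,j}\{\ln\sigma^2+Z(x_j^{(k,a)})^2/\sigma^2\}$, which after normalization behaves like $g_S(\sigma^2)=\ln\sigma^2+S/\sigma^2$ with $S=\frac1n\sum_i Z(s_i)^2$. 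The crucial point is that $S$ remains a non-degenerate random variable as $n\to\infty$ (it tracks $\int_0^1 Z^2$, not a constant), so the limiting criterion in \eqref{eq:for:cons:asymptotic:pl} is random and the nuisance acts on $\sigma^2$, not on $\theta$. Your proposed structure $c_1\,g(\theta\sigma^2)+c_2\,h(\theta)$ with deterministic $c_2 h(\theta)$ would in fact predict the \emph{opposite} dichotomy: in case (iii), with $\sigma^2$ free, one could put $\sigma^2=\theta_0\sigma_0^2/\theta$ for every $\theta$ and then minimize $h$ over $[a,b]$, keeping the product on the microergodic curve and yielding consistency — contradicting the statement. With the correct (random, $\sigma^2$-directed) distortion, a free $\sigma^2$ is pulled toward $S$ and away from the curve when $\theta$ is box-constrained, which is precisely why (i) and (iii) fail while (ii) and (iv) hold.

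This misidentification also changes the proof technology needed in the inconsistency cases. Because the limit criterion is random, one cannot argue via a deterministic argmin lying off the curve; the paper instead shows that on an event of fixed positive probability — constructed from constraints on the path of $Z$ (Lemma \ref{lem:constr:GP}: $\inf_t Z(t)^2$ large in the subcase $ad>\theta_0\sigma_0^2$, $\sup_t Z(t)^2$ small in the subcase $bc<\theta_0\sigma_0^2$) — every $\psi\in J$ with $|\theta\sigma^2-\theta_0\sigma_0^2|\le\epsilon$ has strictly larger criterion than some explicit competitor (e.g.\ $(b,d)$, or $(\theta,\sigma^2-\epsilon)$), which rules out convergence in probability. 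Similarly, in the consistency cases the comparison point is the data-dependent $\tilde\psi$ with $\tilde\sigma^2$ the projection of $S$ onto $[c,d]$ and $\tilde\theta=\theta_0\sigma_0^2/\tilde\sigma^2\in[a,b]$ (this is exactly where the conditions $ad\le\theta_0\sigma_0^2\le bc$ enter), with the unbounded-$\theta$ directions in case (iv) handled by Yin's boundary bounds; uniqueness of a deterministic limiting minimizer, which your step (5) invokes, is not available. Your reduction via Lemma \ref{lemma:for:PL:CL:to:lik} and the import of Yin's approximations are the right first moves, but the argument as outlined would not produce the four-way dichotomy as stated.
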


\begin{remark} \normalfont
In Theorem \ref{thm:consistency:inconsistency:pl}, the parameter set $J$ always contains a pair $(\tilde{\theta},\tilde{\sigma}^2)$ so that $\tilde{\theta} \tilde{\sigma}^2 = \theta_0 \sigma_0^2$, where we recall that $\theta_0 \sigma_0^2$ is the unknown microergodic parameter. Thus, in the framework of Theorem \ref{thm:consistency:inconsistency:pl}, the parameter $\theta_0 \sigma_0^2$ is always consistently estimable, and indeed Theorem \ref{thm:consistency:inconsistency:plc} below shows that there exists an estimator, namely the WPCMLE, that is consistent. This does not mean that all the estimators of $\theta_0 \sigma_0^2$ are consistent, and indeed Theorem \ref{thm:consistency:inconsistency:pl} shows that the WPMLE can be inconsistent.
\end{remark}

Theorem \ref{thm:consistency:inconsistency:pl} can be interpreted as follows: The pairwise likelihood criterion can be decomposed as
\[
{p \ell}_{n}(\psi)
= 
S_1( \sigma^2 ) + S_2( \theta , \sigma^2 ),
\]
where $S_1( \sigma^2 )$ is the first triple sum in \eqref{eq:reindexing:pl} in the proof of Theorem \ref{thm:consistency:inconsistency:pl} and $S_2( \theta , \sigma^2 )$ is the second one.
The minimizer of the random function $S_2$ of $\theta$ and $\sigma^2$ would yield a consistent estimator of $\theta_0 \sigma_0^2$. 
However, the random function $S_1$ has random fluctuations that do not become negligible as $n \to \infty$. The consequence of these fluctuations is that the WPMLE of $\theta_0 \sigma_0^2$ is inconsistent if the range of possible values for $\sigma^2$ is too large compared to that for $\theta$ (cases (i) and (iii)). On the contrary, if the range of possible values for $\theta$ is large enough compared to that for $\sigma^2$, then the WPMLE $\hat{\theta}_{p \ell}$ is able to compensate, so to speak, the non-negligible fluctuations of $S_1$. More precisely, independently of the value of $\hat{\sigma}_{p \ell}^2$, in the cases (ii) and (iv), because of the good behavior of $S_2$, the value of $\hat{\theta}_{p\ell}$ will be so that $\hat{\theta}_{p\ell} \hat{\sigma}_{p\ell}^2 $ does converge to $\theta_0 \sigma_0^2$. This discussion is the basis of the proof of Theorem \ref{thm:consistency:inconsistency:pl}, to which we refer for more details. We remark that this discussion applies to the WPMLE, and not to the WPCMLE which is a different estimator and is always consistent, see Theorem \ref{thm:consistency:inconsistency:plc} below.

It should be mentioned that, under increasing-domain asymptotics, the WPMLE of $(\theta_0, \sigma_0^2)$, and thus of $\theta_0 \sigma_0^2$, is consistent, independently of the form of $J$, provided $J$ contains $(\theta_0, \sigma_0^2)$ (see the general results given in \cite{Bevilacqua:Gaetan:2015}). It can indeed be checked that $S_1(\sigma^2)$ has random fluctuations that are typically asymptotically negligible under increasing-domain asymptotics. Hence, we have an additional illustration of the important qualitative differences that can appear when comparing fixed and increasing-domain asymptotics (see also \cite{ZhaZim2005}).

From a practical standpoint, Theorem \ref{thm:consistency:inconsistency:pl} provides a warning when using the WPMLE. In particular, a situation that often occurs in practice is the case (iii), where $\theta$ has a compact range of allowed values and where for any fixed $\theta$, the value of $\sigma^2$ that optimizes the pairwise likelihood (without restriction) can be computed explicitly. This case (iii) is a case of inconsistent estimation of the microergodic parameter. Furthermore, in \ref{appendix:inconsistency}, we provide another inconsistency result for the WPMLE, for more general covariance functions and when a single variance parameter is estimated.

The following theorem gives the consistency of the WPCMLE for the microergodic parameter, for all the cases for $J$ under investigation in Theorem \ref{thm:consistency:inconsistency:pl}. Hence, Theorems \ref{thm:consistency:inconsistency:pl} and \ref{thm:consistency:inconsistency:plc} jointly provide an incentive to use pairwise conditional likelihood rather than pairwise (unconditional) likelihood in practice.

\begin{theorem} \label{thm:consistency:inconsistency:plc}
	Assume that $K < + \infty$.
	Let $0 < a \leq b < + \infty$ and $0 < c \leq d < + \infty$ be fixed. Let $J = [a,b] \times [ c , d ] $ or $J = [a,b] \times (0 , \infty)$ or $J = (0 , \infty) \times  [c,d]$ and assume that there exist $\tilde{\theta},\tilde{\sigma}^2$ in $J$ so that $\tilde{\theta} \tilde{\sigma}^2 = \theta_0 \sigma_0^2$.
	Then, $\hat{\theta}_{{pc\ell}} \hat{\sigma}^{2}_{{pc\ell}}$ converges  to $\theta_0 \sigma_0^2$ almost surely. 
\end{theorem}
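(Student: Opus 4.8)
The plan is to reduce the weighted pairwise conditional likelihood to a sum of full likelihood criteria via Lemma~\ref{lemma:for:PL:CL:to:lik}, and then to mimic the consistency argument of \cite{Yin1991} term by term. Concretely, by \eqref{eq:pairwise:cond:lik:to:lik} we can write $pc\ell_n(\psi) = \sum_{k=1}^{K} w_k \sum_{a=0}^{k-1} \left\{ \ell_n^{(k,a)}(\psi) + \tilde{\ell}_n^{(k,a)}(\psi) \right\}$, where, for fixed $k \le K$ and $a$, the points $x^{(k,a)}_0 < x^{(k,a)}_1 < \cdots$ form an increasing subset of $\{s_1,\dots,s_n\}$ lying in $[0,1]$, and $\ell_n^{(k,a)}(\psi) = \sum_j \ell_{x^{(k,a)}_j \mid x^{(k,a)}_{j+1}}(\psi)$ is, by the Markov property of the Ornstein--Uhlenbeck process (Lemma~1 in \cite{Yin1991}), exactly the full Gaussian log-likelihood criterion (up to an additive term not depending on $\psi$, namely $\ln(\sigma^2) + Z(x^{(k,a)}_0)^2/\sigma^2$) for the observation vector at these points, and similarly for $\tilde{\ell}_n^{(k,a)}$ with the roles of successive points reversed. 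The number of active $(k,a)$ pairs is finite, $\sum_{k=1}^K k$, independently of $n$.

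The key step is then the asymptotic analysis of each full-likelihood subcriterion. For a fixed $(k,a)$, the sampling scheme $x^{(k,a)}_0, x^{(k,a)}_1, \dots$ satisfies the hypotheses under which Yin's expansions hold: the number of points tends to infinity, they stay in $[0,1]$, and (this needs a short verification) the relevant sums $\sum_j (x^{(k,a)}_{j+1} - x^{(k,a)}_j)$ remain bounded. Following \cite{Yin1991}, for each fixed $(k,a)$ the profiled-out criterion in the variable $\sigma^2\theta$ has the property that $\frac{1}{n_{k,a}} \ell_n^{(k,a)}(\psi)$ converges, uniformly over $\psi \in J$, to a deterministic limit minimized precisely when $\sigma^2\theta = \sigma_0^2\theta_0$; more precisely one uses that the minimizer $\widehat{(\sigma^2\theta)}$ of the subcriterion (over all of $J$, or after profiling in $\theta$) converges a.s.\ to $\sigma_0^2\theta_0$. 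Summing the finitely many $(k,a)$ terms with the fixed nonnegative weights $w_k$ preserves this: $\frac{1}{n} pc\ell_n(\psi)$, suitably normalized, converges a.s.\ uniformly on $J$ to a deterministic function of $\sigma^2\theta$ alone that is uniquely minimized at $\sigma_0^2\theta_0$. A standard argmin/M-estimation argument (using that $J$ is closed, that $\widehat{\psi}_{pc\ell}$ lies in a region where $\sigma^2\theta$ is bounded and bounded away from $0$ because of the form of $J$ and the existence of $(\tilde\theta,\tilde\sigma^2) \in J$, and that the limit criterion penalizes departures of $\sigma^2\theta$ from $\sigma_0^2\theta_0$) then yields $\hat\theta_{pc\ell}\hat\sigma^2_{pc\ell} \to \sigma_0^2\theta_0$ a.s.

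The crucial difference from the WPMLE case is that each conditional subcriterion $\ell_n^{(k,a)}(\psi)$ contains, after Markovization, the \emph{full} Gaussian likelihood for a subsample, with no leftover term of the form $S_1(\sigma^2)$ with non-vanishing random fluctuations; the only $\psi$-dependent leftover is the harmless $\ln(\sigma^2) + Z(x^{(k,a)}_0)^2/\sigma^2$, which contributes a bounded (finitely many) number of such terms whose contribution to $\frac{1}{n} pc\ell_n$ vanishes, and which in any case does not interfere with the identification of $\sigma_0^2\theta_0$. The main obstacle I anticipate is purely bookkeeping: verifying that the triangular-array structure of the subsamples $x^{(k,a)}_j$ (whose spacings inherit sums of $k$ consecutive original spacings) satisfies verbatim the regularity hypotheses of the expansions in \cite{Yin1991}, and handling the endpoint/boundary $(k,a)$ subsamples uniformly. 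Once that is in place, the consistency follows from Yin's results applied to each of the finitely many subcriteria, combined by positivity of the $w_k$; no new probabilistic input (in contrast to the extreme-value-of-Gaussian-process arguments needed for the inconsistency of the WPMLE) is required.
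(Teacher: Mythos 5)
Your overall route is the same as the paper's: decompose ${pc\ell}_n$ via Lemma \ref{lemma:for:PL:CL:to:lik}, use the Markov property (Lemma 1 of \cite{Yin1991}) to identify each $(k,a)$-block with a full likelihood for a subsample up to boundary terms, note that only finitely many boundary terms arise (this is exactly what kills the troublesome $S_1(\sigma^2)$ of the WPMLE), and then reuse Yin-type asymptotics for the finitely many subcriteria. Two points, however, need repair. First, the leftover term you exhibit, $\ln(\sigma^2)+Z(x^{(k,a)}_0)^2/\sigma^2$, does depend on $\psi$ (you first call it $\psi$-free, then treat it as $\psi$-dependent); in the paper the forward and backward conditional sums are combined so that the $\ln(\sigma^2)$ parts cancel and the leftover is $\sum_{k\leq K} w_k\sum_{a}\sigma^{-2}\{Z(s_{1+a})^2-Z(s_{1+a+k(n_{k,a}+1)})^2\}$, finitely many terms. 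Your claim that such terms are negligible for $n^{-1}{pc\ell}_n$ is true pointwise but not uniformly over $\sigma^2\in(0,\infty)$, since $Z^2/\sigma^2$ blows up as $\sigma^2\to 0$; this is precisely why the case $J=[a,b]\times(0,\infty)$ cannot be dispatched by a blanket "vanishing contribution" remark.

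Second, and more substantively, the step "$n^{-1}{pc\ell}_n$ converges a.s.\ uniformly on $J$ to a deterministic function uniquely minimized at $\sigma^2\theta=\sigma_0^2\theta_0$" and the assertion that $\hat\psi_{pc\ell}$ lies where $\sigma^2\theta$ is bounded and bounded away from zero "because of the form of $J$" fail as stated in the two noncompact cases, which are part of the theorem. For $J=(0,\infty)\times[c,d]$ there is no uniform limit over $\theta\in(0,\infty)$; one needs the separate lower bounds for small $\theta$ and large $\theta$ taken from \cite{Yin1991} (the paper's displays \eqref{eq:cons:infinite:supp:theta:one} and \eqref{eq:cons:infinite:supp:theta:two}, already established in the proof of Theorem \ref{thm:consistency:inconsistency:pl}), combined with the uniform expansion \eqref{eq:cons:infinite:supp:theta:three} on the intermediate compact range. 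For $J=[a,b]\times(0,\infty)$ the paper instead profiles $\sigma^2$ out explicitly, $\hat\sigma^2(\theta)=\mathrm{argmin}_{\sigma^2}\,{pc\ell}_n(\theta,\sigma^2)$, and argues as in \cite{Yin1991}. On the compact rectangle $[a,b]\times[c,d]$ your uniform-convergence/M-estimation argument is sound and coincides with the paper's; to cover the theorem as stated you must supply these extra controls for the unbounded parameter ranges rather than appeal to the shape of $J$.
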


\section{Asymptotic normality \label{sec:AN}}

Throughout this section, we assume that $\rm max_{i\in \{1,\ldots,n-1\}} (s_{i+1} - s_i) \to 0$ as $n \to \infty$.
In order to express the asymptotic variance, in the asymptotic normality result in Theorem \ref{thm:normality} below, let us define, for $i \in \mathbb{N}$ and $k \in \mathbb{N}$, with $ i \geq 1$ and $ i+k \leq n$,
\[
W^{2}_{i,i+k}=\frac{\left\{ Z(s_{i+k})-e^{-\theta_{0}(s_{i+k} - s_i)}Z(s_{i})\right\}^{2}}{ \sigma_{0}^{2} (1-e^{-2\theta_{0} (s_{i+k} - s_i)}) }.
\]
Let us also define
\begin{equation}\label{eqtau}
\tau_{n}^{2}=\frac{1}{n}{\rm var}\left\{ \sum_{i=1}^{n-1}\sum_{k=1}^{{\rm min}\{K,n-i\}} w_{k} \left(W^{2}_{i,i+k}-1\right)\right\}.
\end{equation}

Lemma \ref{lemma:approx:tauKn} provides an asymptotic approximation of $\tau_{n}^{2}$ when $K< + \infty$. 

\begin{lemma} \label{lemma:approx:tauKn}
	Assume that $K < + \infty$. 
	Let, for $1 \leq i<j\leq n$ and $1 \leq k<\ell \leq n$, $b_{i,j,k,\ell}$ be defined by $b_{i,j,k,\ell} = b_{k,\ell,i,j}$ and, for $i \leq k$,
	\begin{equation}  \label{eq:bijkl}
	b_{i,j,k,\ell}
	=
	\begin{cases}
	0 & \mbox{if     } j \leq k \\
	\frac{(s_j - s_k)^2}{(s_j - s_i)(s_\ell - s_k)} & \mbox{if     }  k \leq j \leq \ell \\
	\frac{s_\ell - s_k}{s_j - s_i} & \mbox{if     }  k \leq \ell \leq j.
	\end{cases}
	\end{equation}
	
	We have as $n \to \infty$
	\begin{align} \label{eq:tauKn:equivalent}
	\tau_{n}^2
	= 
	\left(
	\frac{2}{n}
	\sum_{i=1}^{n-1} \sum_{j=i+1}^{{\rm min}(n,i+K)}
	\sum_{k=1}^{n-1} \sum_{\ell=k+1}^{{\rm min}(n,k+K)}
	w_{j-i} w_{\ell-k}
	b_{i,j,k,\ell}
	\right)  
	+ o(1).
	\end{align}
\end{lemma}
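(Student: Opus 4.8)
The plan is to reduce $\tau_n^2$ to an explicit double sum of covariances between the Gaussian innovations underlying the $W^2_{i,i+k}$, compute these covariances exactly from the Markov structure of the Ornstein--Uhlenbeck process, and then Taylor-expand them as the maximal spacing $h_n := \max_{i \in \{1,\ldots,n-1\}}(s_{i+1} - s_i)$ tends to $0$. For $1 \le i < j \le n$ write
\[
\epsilon_{i,j} = \frac{Z(s_j) - e^{-\theta_0(s_j - s_i)}Z(s_i)}{\sigma_0\sqrt{1 - e^{-2\theta_0(s_j - s_i)}}},
\]
so that $W^2_{i,i+k} = \epsilon_{i,i+k}^2$ and each $\epsilon_{i,j}$ is a standard Gaussian variable; in particular $E[W^2_{i,i+k}] = 1$. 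Expanding the variance in \eqref{eqtau}, relabeling $j = i + k$ and $\ell = i' + k'$, and using the identity $\mathrm{cov}(U^2, V^2) = 2\,\mathrm{cov}(U,V)^2$ valid for any centered jointly Gaussian pair $(U,V)$, one obtains
\[
n\tau_n^2 = 2 \sum_{i=1}^{n-1}\sum_{j=i+1}^{\min(n,i+K)}\sum_{k=1}^{n-1}\sum_{\ell=k+1}^{\min(n,k+K)} w_{j-i}\,w_{\ell-k}\,\mathrm{cov}(\epsilon_{i,j},\epsilon_{k,\ell})^2 .
\]
It then suffices to show that $\sum w_{j-i}w_{\ell-k}\bigl(\mathrm{cov}(\epsilon_{i,j},\epsilon_{k,\ell})^2 - b_{i,j,k,\ell}\bigr) = o(n)$.

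Next I would compute $\mathrm{cov}(\epsilon_{i,j},\epsilon_{k,\ell})$ exactly, assuming $i \le k$ by symmetry. Using the moving-average representation $Z(t) = \sigma_0\sqrt{2\theta_0}\int_{-\infty}^t e^{-\theta_0(t-u)}\,dB_u$, which gives $Z(s_j) - e^{-\theta_0(s_j - s_i)}Z(s_i) = \sigma_0\sqrt{2\theta_0}\int_{s_i}^{s_j} e^{-\theta_0(s_j - u)}\,dB_u$, the Itô isometry yields $\mathrm{cov}(\epsilon_{i,j},\epsilon_{k,\ell}) = 0$ when $j \le k$ (disjoint intervals), and otherwise
\[
\mathrm{cov}(\epsilon_{i,j},\epsilon_{k,\ell}) = \frac{e^{-\theta_0(s_j + s_\ell - 2\min(s_j,s_\ell))}\bigl(1 - e^{-2\theta_0(\min(s_j,s_\ell) - s_k)}\bigr)}{\sqrt{\bigl(1 - e^{-2\theta_0(s_j - s_i)}\bigr)\bigl(1 - e^{-2\theta_0(s_\ell - s_k)}\bigr)}} .
\]
(Alternatively the same formula follows by expanding $\mathrm{cov}(Z(s_j) - e^{-\theta_0(s_j-s_i)}Z(s_i),\, Z(s_\ell) - e^{-\theta_0(s_\ell-s_k)}Z(s_k))$ into four terms and repeatedly using $e^{-\theta_0 a}e^{-\theta_0 b} = e^{-\theta_0(a+b)}$.) Splitting into the cases $s_j \le s_\ell$ and $s_\ell \le s_j$ and squaring, one sees that $\mathrm{cov}(\epsilon_{i,j},\epsilon_{k,\ell})^2$ matches, up to exponential prefactors equal to $1 + O(\cdot)$ and denominators of the form $1 - e^{-2\theta_0 x} = 2\theta_0 x(1 + O(x))$, exactly the three-case expression defining $b_{i,j,k,\ell}$ in \eqref{eq:bijkl}.

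Finally I would quantify the error. Whenever $w_{j-i}w_{\ell-k} \ne 0$ we have $s_j - s_i \le K h_n$ and $s_\ell - s_k \le K h_n$, and whenever moreover $[s_i,s_j] \cap [s_k,s_\ell] \ne \emptyset$ all the gaps appearing in the formula above are also $\le K h_n$. Using $e^{-\theta_0 x} = 1 + O(x)$ and $(1 - e^{-2\theta_0 x})/(2\theta_0 x) = 1 + O(x)$ uniformly for $x \in [0, K h_n]$, together with the fact that the leading ratio in each overlap case ($\frac{(s_j - s_k)^2}{(s_j - s_i)(s_\ell - s_k)}$ or $\frac{s_\ell - s_k}{s_j - s_i}$) is bounded by $1$ because of the interval-inclusion relations, one gets
\[
\mathrm{cov}(\epsilon_{i,j},\epsilon_{k,\ell})^2 = b_{i,j,k,\ell} + O(K h_n)
\]
uniformly over all index quadruples, with the error equal to $0$ for disjoint intervals. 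Since there are at most $nK$ pairs $(i,j)$ with $w_{j-i} \ne 0$, and for each of them at most $O(K^2)$ pairs $(k,\ell)$ with $w_{\ell-k} \ne 0$ and overlapping interval (which forces $i - K \le k \le i + K$ and $k < \ell \le k + K$), and since $\sup_k w_k < \infty$, the total error in $n\tau_n^2$ is $O(nK^3)\cdot O(K h_n) = o(n)$; dividing by $n$ yields \eqref{eq:tauKn:equivalent}. I expect the main obstacle to be the uniformity of the $O(K h_n)$ Taylor remainder in the absence of any lower bound on the spacings $s_{i+1} - s_i$ — which is precisely why one must exploit that the leading ratios are $\le 1$, turning a relative error into an absolute one — and, secondarily, the bookkeeping of the disjoint-interval cancellation so that the error is supported on only $O(nK^3)$ quadruples rather than $O(n^2 K^4)$.
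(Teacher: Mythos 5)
Your proof is correct and follows essentially the same route as the paper: the Gaussian identity $\mathrm{cov}(W_{i,j}^2-1,W_{k,\ell}^2-1)=2\,\mathrm{cov}(W_{i,j},W_{k,\ell})^2$, an explicit case-by-case computation of the covariance (which you make concrete via the Ornstein--Uhlenbeck stochastic-integral representation), a Taylor expansion identifying $b_{i,j,k,\ell}$ with the interval-inclusion bound $b_{i,j,k,\ell}\leq 1$ making the remainder absolute and uniform, and a count of the overlapping quadruples. The only immaterial difference is the final error summation: the paper notes each spacing $s_a-s_{a-1}$ occurs at most $4K^4$ times so the error is $O\big(\sum_{a=2}^n (s_a-s_{a-1})\big)=O(1)$, whereas you bound it by $O(nK^4 h_n)=o(n)$ using the maximal-spacing assumption of Section \ref{sec:AN} --- either suffices for \eqref{eq:tauKn:equivalent}.
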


In Lemmas \ref{lem:larger:variance} and \ref{lem:smaller:variance}, we show that $\tau_{n}^{2}$ is lower and upper bounded as $n \to \infty$.

\begin{lemma} \label{lem:larger:variance}
	Assume that $K < + \infty$.
	Then $\mathrm{\lim \rm inf}_{n \to \infty} \tau_{n}^2 \geq 2 (\sum_{k=1}^K w_k)^2$. 
\end{lemma}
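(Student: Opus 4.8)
The plan is to use the explicit approximation of $\tau_n^2$ in Lemma~\ref{lemma:approx:tauKn} and bound the quadruple sum from below by restricting attention to a sub-collection of the terms $b_{i,j,k,\ell}$ on which a clean lower bound is available. Concretely, I would keep only the diagonal-type contributions in which the two pairs $(i,j)$ and $(k,\ell)$ "overlap" enough to make $b_{i,j,k,\ell}$ close to $1$. Since all weights $w_k$ are nonnegative and all $b_{i,j,k,\ell}$ are nonnegative, discarding the remaining terms only decreases the sum, so any such lower bound on the retained part is a valid lower bound on $\tau_n^2 + o(1)$.

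The key step is to identify, for each index $i$, a configuration of $(i,j,k,\ell)$ with $b_{i,j,k,\ell}$ bounded below by (something converging to) $1$. Taking $k = i$ and $j,\ell \in \{i+1,\ldots,\mathrm{min}(n,i+K)\}$, one is in the regime $k \le j \le \ell$ or $k \le \ell \le j$ of \eqref{eq:bijkl}; in the first case $b_{i,i,i+(j-i),i+(\ell-i)} = (s_j-s_i)^2/((s_j-s_i)(s_\ell-s_i)) = (s_j-s_i)/(s_\ell-s_i)$, and in the second case it is $(s_\ell-s_i)/(s_j-s_i)$. Summing $w_{j-i} w_{\ell-i} b$ over $j,\ell$ in that window for fixed $i$, and using symmetry in $j \leftrightarrow \ell$ together with the elementary inequality $x + x^{-1} \ge 2$, I would show that $\sum_{j,\ell} w_{j-i} w_{\ell-i} b_{i,i,\cdot,\cdot} \ge (\sum_{k=1}^{K} w_k)^2$ once $i$ is far enough from the right endpoint $n$ that the full window $\{i+1,\ldots,i+K\}$ is available. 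This uses $K < \infty$ and the fact that all but $O(K) = O(1)$ of the indices $i \in \{1,\ldots,n-1\}$ have the full window available. Each such $i$ contributes at least $(\sum_{k=1}^K w_k)^2$ to the inner double sum over $(j,k,\ell)$; there are $n - O(1)$ such $i$; multiplying by the prefactor $2/n$ and letting $n \to \infty$ gives $\mathrm{\liminf}_n \tau_n^2 \ge 2(\sum_{k=1}^K w_k)^2$.

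The main obstacle I anticipate is bookkeeping the boundary effects and being careful about the "for $i \le k$" convention in the definition of $b_{i,j,k,\ell}$: when both pairs start at the same index $i=k$, one must make sure the right branch of \eqref{eq:bijkl} is being applied (and use the symmetry $b_{i,j,k,\ell}=b_{k,\ell,i,j}$ to reduce to $j \le \ell$ without loss of generality). A second, minor point is that the quadruple sum in \eqref{eq:tauKn:equivalent} ranges over \emph{all} pairs $(i,j)$ and $(k,\ell)$, not only those with $i=k$, so one must explicitly invoke nonnegativity of every summand (each $w$ is $\ge 0$ and each $b_{i,j,k,\ell} \ge 0$) to justify throwing the cross terms away. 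Once these are handled the estimate is just the AM--GM bound $x + 1/x \ge 2$ summed over the window, so no heavy computation is needed.
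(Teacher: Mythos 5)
Your reduction to the ``same left endpoint'' terms does not work, and the AM--GM step it relies on is based on a misreading of \eqref{eq:bijkl}. When the two pairs share the left endpoint ($k=i$, $j,\ell>i$), put $u_m=s_{i+m}-s_i$; then both ordered terms associated with the unordered pair $\{j,\ell\}$ are \emph{equal} (this is exactly the symmetry $b_{i,j,k,\ell}=b_{k,\ell,i,j}$), and each equals
\[
b_{i,j,i,\ell}\;=\;\min\!\left(\frac{u_{j-i}}{u_{\ell-i}},\,\frac{u_{\ell-i}}{u_{j-i}}\right)\;\le\;1 ,
\]
not a value $x$ paired with its reciprocal $1/x$. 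So there is no $x+1/x\ge 2$ to exploit, and the claimed per-$i$ bound $\sum_{j,\ell}w_{j-i}w_{\ell-i}\,b_{i,j,i,\ell}\ge(\sum_{k=1}^K w_k)^2$ is false. Concretely, for equispaced points, $K=2$ and $w_1=w_2=1$, the restricted sum per $i$ is $1+1+2\cdot\tfrac12=3<4=(w_1+w_2)^2$; worse, for irregular designs with alternating gaps $n^{-2}$ and $n^{-1}$ the cross terms $b_{i,i+1,i,i+2}$ tend to $0$, so your restriction can only yield $\liminf_n\tau_n^2\ge 2\sum_k w_k^2$, which is strictly weaker than the statement whenever $K\ge 2$. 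Discarding all terms with $k\ne i$ throws away contributions that are genuinely needed.

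The paper's proof keeps a different sub-collection. Fix the two lags $\ell\le k$ (both $\le K$) and the \emph{long} interval $[s_i,s_{i+k}]$, and sum over all translates $j=i,\ldots,i+k-\ell$ of the \emph{short} interval $[s_j,s_{j+\ell}]$ nested inside it; by the third branch of \eqref{eq:bijkl} each such term equals $(s_{j+\ell}-s_j)/(s_{i+k}-s_i)$, and since these nested intervals overlap consecutively and cover $[s_i,s_{i+k}]$, a telescoping bound gives
\[
\sum_{j=i}^{i+k-\ell}\frac{s_{j+\ell}-s_j}{s_{i+k}-s_i}\;\ge\;\frac{(s_{i+\ell}-s_i)+\sum_{j=i+1}^{i+k-\ell}(s_{j+\ell}-s_{j+\ell-1})}{s_{i+k}-s_i}\;=\;1
\]
for every admissible $i$, whence $\sum_{i}\sum_{j}b_{i,i+k,j,j+\ell}\ge n-k$ and, via \eqref{eq:tauKn:equivalent} and nonnegativity of the remaining terms, $\liminf_n\tau_n^2\ge 2(\sum_{k=1}^K w_k)^2$. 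The parts of your plan that are sound (nonnegativity of all summands so that terms may be discarded, and the $n-O(1)$ boundary bookkeeping) are also used there, but the essential idea you are missing is to sum over shifted positions of the shorter pair inside the longer one rather than over pairs anchored at a common left endpoint.
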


\begin{lemma} \label{lem:smaller:variance}
	Assume that $K < + \infty$.
	Then $\mathrm{\lim \rm sup}_{n \to \infty} \tau_{n}^2 < + \infty$. 
\end{lemma}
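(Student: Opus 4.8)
The plan is to read off the value of $\tau_n^2$ from Lemma~\ref{lemma:approx:tauKn} and bound the leading term in \eqref{eq:tauKn:equivalent} by a constant not depending on $n$. Everything reduces to two elementary observations about the coefficients $b_{i,j,k,\ell}$ of \eqref{eq:bijkl}. The first observation is that $0 \le b_{i,j,k,\ell} \le 1$ for all admissible indices: nonnegativity is clear, and for the upper bound one uses the symmetry $b_{i,j,k,\ell}=b_{k,\ell,i,j}$ to assume $i\le k$ and inspects the three cases of \eqref{eq:bijkl} — when $j\le k$ the value is $0$; when $k\le j\le \ell$ one has $s_j-s_k\le s_j-s_i$ and $s_j-s_k\le s_\ell-s_k$ (since $i\le k\le j\le \ell$), hence $(s_j-s_k)^2\le (s_j-s_i)(s_\ell-s_k)$ and $b_{i,j,k,\ell}\le 1$; when $k\le \ell\le j$ one has $[s_k,s_\ell]\subseteq[s_i,s_j]$, hence $s_\ell-s_k\le s_j-s_i$ and again $b_{i,j,k,\ell}\le 1$.

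The second observation is that, for each fixed $i$, there are at most $2K$ values of $k$ for which $b_{i,j,k,\ell}$ can be nonzero for some $j,\ell$ appearing in the sum \eqref{eq:tauKn:equivalent}. Indeed, if $i\le k$ then $b_{i,j,k,\ell}\ne 0$ forces $k<j\le \min(n,i+K)\le i+K$, so $k\in\{i,\dots,i+K-1\}$; and, applying the same reasoning to $b_{k,\ell,i,j}$, if $k<i$ then $b_{i,j,k,\ell}\ne 0$ forces $i<\ell\le \min(n,k+K)\le k+K$, so $k\in\{i-K+1,\dots,i-1\}$. Combining the two observations, in \eqref{eq:tauKn:equivalent} the inner double sum over $(k,\ell)$ attached to a fixed $i$ has at most $2K$ nonzero blocks, each bounded by $\big(\sum_{j=i+1}^{i+K}w_{j-i}\big)\big(\sum_{\ell=k+1}^{k+K}w_{\ell-k}\big)=\big(\sum_{m=1}^{K}w_m\big)^2$, since $b_{i,j,k,\ell}\le 1$ and extending the truncated upper summation limits only adds nonnegative terms. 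Summing over the $n-1$ values of $i$ and multiplying by $2/n$ yields
\[
\tau_n^2 \;\le\; \frac{2}{n}\,(n-1)\cdot 2K\Big(\sum_{m=1}^{K}w_m\Big)^2 + o(1) \;\le\; 4K\Big(\sum_{m=1}^{K}w_m\Big)^2 + o(1),
\]
so $\limsup_{n\to\infty}\tau_n^2\le 4K\big(\sum_{m=1}^{K}w_m\big)^2<+\infty$.

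There is no real obstacle here; the only point needing care is the index bookkeeping in the second observation — controlling, uniformly in $i$, the number of index blocks $(k,\ell)$ that contribute a nonzero $b_{i,j,k,\ell}$ — together with the verification that the truncations $\min(n,i+K)$ and $\min(n,k+K)$ do not disturb the estimate, both of which are immediate from \eqref{eq:bijkl}. An alternative route would avoid Lemma~\ref{lemma:approx:tauKn} and expand \eqref{eqtau} directly, using that for centered jointly Gaussian variables ${\rm cov}(W_{i,i+k}^2,W_{i',i'+k'}^2)=2\,{\rm cov}(W_{i,i+k},W_{i',i'+k'})^2$ together with the Markov property of the Ornstein--Uhlenbeck process to discard pairs whose underlying intervals do not overlap; the counting step would then be essentially the same. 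I would present the first route, since it reuses a lemma already in place.
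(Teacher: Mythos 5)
Your proof is correct and follows essentially the same route as the paper's: it invokes Lemma~\ref{lemma:approx:tauKn}, checks that each $b_{i,j,k,\ell}\in[0,1]$, and bounds, for each fixed $i$, the number of index combinations contributing a non-zero term (the paper counts fewer than $2K^3$ triples $(j,k,\ell)$, while you count at most $2K$ values of $k$ and absorb the $j,\ell$ sums into $(\sum_{m=1}^K w_m)^2$ --- the same idea with slightly tidier bookkeeping). No gaps; your explicit bound $4K(\sum_{m=1}^K w_m)^2$ is a valid sharpening of the paper's one-line argument.
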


Then, the following theorem establishes the asymptotic normality of the WPMLE (in the cases where it is consistent) and of the WPCMLE (in all the cases) of the microergodic parameter. 

\begin{theorem} \label{thm:normality}
	Under the cases (ii) and (iv) of Theorem \ref{thm:consistency:inconsistency:pl}, and under the same conditions as in this theorem, 
	\begin{equation}
	\frac{\sqrt{n}  (\sum_{k=1}^K w_k) }{\tau_{n}\sigma^{2}_{0}\theta_{0}}(\hat{\sigma}_{{p\ell}}^{2}\hat{\theta}_{{p \ell}}-\sigma^{2}_{0}\theta_{0})\stackrel{\mathcal{D}}{\longrightarrow}
	\mathcal{N}
	\left(0, 1\right).\label{dotto}
	\end{equation}
	Furthermore, under the same conditions as in Theorem \ref{thm:consistency:inconsistency:plc}, 
	\begin{equation}
	\frac{\sqrt{n}  (\sum_{k=1}^K w_k) }{\tau_{n}\sigma^{2}_{0}\theta_{0}}(\hat{\sigma}_{{pc\ell}}^{2}\hat{\theta}_{{pc\ell}}-\sigma^{2}_{0}\theta_{0})\stackrel{\mathcal{D}}{\longrightarrow}
	\mathcal{N}
	\left(0, 1\right).\label{dotto:pcl}
	\end{equation}
\end{theorem}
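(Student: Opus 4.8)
The plan is to combine the reindexing of Lemma~\ref{lemma:for:PL:CL:to:lik} with the one‑pair Taylor expansions of \cite{Yin1991} to show that, for both estimators and in every case under consideration, the estimated microergodic parameter agrees up to $o_P(n^{-1/2})$ with a single fixed weighted average of standardized squared innovations, and then to prove a central limit theorem for that average. Write $c_0 := \sigma_0^2\theta_0$, $V_i := \sum_{k=1}^{\min\{K,n-i\}} w_k(W^2_{i,i+k}-1)$ and $\bar A_n := \bigl(\sum_{i=1}^{n-1}\sum_{k=1}^{\min\{K,n-i\}} w_k W^2_{i,i+k}\bigr)\big/\bigl(\sum_{i=1}^{n-1}\sum_{k=1}^{\min\{K,n-i\}} w_k\bigr)$, so that $\mathrm{var}(\sum_{i=1}^{n-1} V_i) = n\tau_n^2$ by \eqref{eqtau}. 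Writing $\widehat{\sigma^2\theta}$ for $\hat\sigma^2_{p\ell}\hat\theta_{p\ell}$ in cases (ii), (iv) and for $\hat\sigma^2_{pc\ell}\hat\theta_{pc\ell}$ under the conditions of Theorem~\ref{thm:consistency:inconsistency:plc}, the aim is to prove
\[
\sqrt n\,(\widehat{\sigma^2\theta}-c_0) = c_0\sqrt n\,(\bar A_n-1) + o_P(1) = \frac{c_0}{\sqrt n\,\sum_{k=1}^K w_k}\sum_{i=1}^{n-1} V_i + o_P(1),
\]
the second equality using $\sum_{i=1}^{n-1}\sum_{k=1}^{\min\{K,n-i\}} w_k = n\sum_{k=1}^K w_k + O(1)$ and $\sum_i V_i = O_P(\sqrt n)$. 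Since $\tau_n$ is bounded away from $0$ by Lemma~\ref{lem:larger:variance}, this reduces both \eqref{dotto} and \eqref{dotto:pcl} to $(\sqrt n\,\tau_n)^{-1}\sum_{i=1}^{n-1} V_i \stackrel{\mathcal{D}}{\longrightarrow} \mathcal{N}(0,1)$.

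For the first display I would profile out the non‑microergodic direction. For the WPCMLE, $pc\ell_n$ is strictly convex in $1/\sigma^2$, and its unconstrained‑in‑$\sigma^2$ minimiser $\hat\sigma^2_{pc\ell}(\theta)$ is, after applying Lemma~\ref{lemma:for:PL:CL:to:lik}, a ratio of a weighted sum of the quantities $\{Z(s_{i+k})-e^{-\theta(s_{i+k}-s_i)}Z(s_i)\}^2/(1-e^{-2\theta(s_{i+k}-s_i)})$ and their time‑reversed analogues to the weighted count of pairs. For the WPMLE, the identity $\ell_{s,t}(\psi) = \ell_{t\mid s}(\psi) + \ln\sigma^2 + Z(s)^2/\sigma^2$ gives $p\ell_n(\psi) = S_1(\sigma^2) + S_2(\theta,\sigma^2)$ with $S_1$ free of $\theta$ (this is where the $Z(s_i)^2$ terms sit), and I would profile the scale out of $S_2$ at fixed $\sigma^2$, which does not involve $S_1$, obtaining $\hat\theta_{p\ell}(\sigma^2)$. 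Because $\max_i(s_{i+1}-s_i)\to 0$, hence $\max_i(s_{i+k}-s_i)\to 0$ for every $k\le K$, the expansions of \cite{Yin1991} give, uniformly over the relevant parameter range, $\theta\,\{Z(s_{i+k})-e^{-\theta(s_{i+k}-s_i)}Z(s_i)\}^2/(1-e^{-2\theta(s_{i+k}-s_i)}) = c_0 W^2_{i,i+k} + r_{i,k}(\theta)$ with $\sum_{i,k} w_k\,|r_{i,k}(\theta)| = o_P(\sqrt n)$ (and likewise for the time‑reversed analogues, which agree with $c_0 W^2_{i,i+k}$ at leading order), together with $\mathrm{var}(\bar A_n) = O(1/n)$. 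It follows that $\hat\sigma^2_{pc\ell}(\theta)\,\theta = c_0\bar A_n + o_P(n^{-1/2})$ uniformly in $\theta$ and $\hat\theta_{p\ell}(\sigma^2)\,\sigma^2 = c_0\bar A_n + o_P(n^{-1/2})$ uniformly in $\sigma^2$, and substituting the estimator of the non‑microergodic direction yields the first display. The point requiring care is that the relevant profile be compatible with the constraints in $J$: for the WPCMLE this is ensured (or harmless) by the consistency of Theorem~\ref{thm:consistency:inconsistency:plc}; for the WPMLE, in case (iv) any clipping of $\hat\sigma^2_{p\ell}$ to $[c,d]$ is compensated because $\theta$ is unconstrained, while in case (ii) the assumptions $ad\le\sigma_0^2\theta_0\le bc$ are precisely what prevents, along the lines of the proof of Theorem~\ref{thm:consistency:inconsistency:pl}, the clipping of $\hat\theta_{p\ell}$ that would otherwise (cf.\ cases (i), (iii)) let the non‑vanishing fluctuations of $S_1$ contaminate the product.

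For the central limit theorem, introduce the innovations $\varepsilon_1 := Z(s_1)$ and $\varepsilon_i := Z(s_i) - e^{-\theta_0(s_i-s_{i-1})}Z(s_{i-1})$ for $i\ge 2$, which are independent Gaussian variables, and use $Z(s_{i+k}) - e^{-\theta_0(s_{i+k}-s_i)}Z(s_i) = \sum_{l=i+1}^{i+k} e^{-\theta_0(s_{i+k}-s_l)}\varepsilon_l$. This shows $W^2_{i,i+k}\sim\chi^2_1$ and that $V_i$ depends only on $\varepsilon_{i+1},\dots,\varepsilon_{i+K}$; hence $(V_i)_{1\le i\le n-1}$ is a $(K-1)$‑dependent triangular array with $\sup_{n,i} E\,|V_i|^p<\infty$ for every $p$ (the $W^2_{i,i+k}$ being standard chi‑squares irrespective of the spacings). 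A central limit theorem for $m$‑dependent triangular arrays then gives $\mathrm{var}(\sum_i V_i)^{-1/2}\sum_i V_i \stackrel{\mathcal{D}}{\longrightarrow} \mathcal{N}(0,1)$ along any subsequence on which $\tau_n^2$ converges; since $\tau_n^2$ is bounded above by Lemma~\ref{lem:smaller:variance} and bounded away from $0$ by Lemma~\ref{lem:larger:variance}, every subsequence admits such a sub‑subsequence, so the convergence holds along the full sequence, i.e.\ $(\sqrt n\,\tau_n)^{-1}\sum_{i=1}^{n-1} V_i \stackrel{\mathcal{D}}{\longrightarrow} \mathcal{N}(0,1)$, which together with the first paragraph establishes \eqref{dotto} and \eqref{dotto:pcl}.

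The step I expect to be the main obstacle is the first display, more precisely the uniform control of the remainders $r_{i,k}(\theta)$ in the one‑pair expansion over the whole non‑microergodic range and for all lags $k\le K$ rather than only adjacent pairs as in \cite{Yin1991}, and, for the WPMLE, the argument — following the proof of Theorem~\ref{thm:consistency:inconsistency:pl} — that under the constraints of cases (ii) and (iv) the non‑vanishing random fluctuations of $S_1$ cannot push $\widehat{\sigma^2\theta}$ away from $c_0\bar A_n$ at the $n^{-1/2}$ scale, boundary configurations of the nuisance parameter included. The remaining ingredients — the closed‑form profiles, the $(K-1)$‑dependence obtained from the innovation representation, and the subsequence passage handling the data‑dependent normalisation $\tau_n$ — are routine given Lemma~\ref{lemma:for:PL:CL:to:lik} and Lemmas~\ref{lem:larger:variance} and~\ref{lem:smaller:variance}.
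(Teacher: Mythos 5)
Your proposal is correct in outline and follows the same skeleton as the paper's proof: use Lemma \ref{lemma:for:PL:CL:to:lik} together with the expansions of \cite{Yin1991} to show that $\sqrt{n}\,(\sum_{k=1}^K w_k)(\hat{\sigma}^2\hat{\theta}-\sigma_0^2\theta_0)$ equals $\sigma_0^2\theta_0\, n^{-1/2}\sum_i\sum_k w_k(W^2_{i,i+k}-1)$ up to $o_P(1)$, recognize $n\tau_n^2$ from \eqref{eqtau} as the variance of that sum, invoke a CLT for dependent triangular arrays, and handle the data-dependent normalization $\tau_n$ by the subsequence argument based on Lemmas \ref{lem:larger:variance} and \ref{lem:smaller:variance}. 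The two places where you deviate are local. First, you derive the linearization by profiling the nuisance direction (explicit $\hat{\sigma}^2_{pc\ell}(\theta)$, and $\hat{\theta}_{p\ell}(\sigma^2)$ from $S_2$ only), whereas the paper works with the score $\omega(\psi)=\partial_\theta\, p\ell_n(\psi)$, expands it uniformly on $J$ via (3.11) of \cite{Yin1991}, and evaluates it at the estimator, using the already-established almost sure consistency of $\hat{\sigma}^2\hat{\theta}$ from Theorem \ref{thm:consistency:inconsistency:pl}; this is precisely why the obstacle you single out (uniform $o_P(n^{-1/2})$ control of the profile over the whole nuisance range, plus boundary clipping in cases (ii) and (iv)) largely disappears in the paper's route, which only needs the uniform score expansion and consistency rather than a uniformly accurate profile. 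Second, for the CLT the paper indexes the summands as $T_{(i-1)K+k}=W^2_{i,i+k}-1$, notes independence at lags at least $K(K+1)$, and applies Theorem 2.1 of \cite{Neumann2013}; your innovation representation showing that $V_i$ depends only on $\varepsilon_{i+1},\ldots,\varepsilon_{i+K}$, hence that $(V_i)$ is a $(K-1)$-dependent array with exactly $\chi^2_1$ building blocks, is an equally valid and arguably more self-contained way to justify the same normal limit. In short: same decomposition and same key ingredients, with a profiling step that would demand more uniformity work than the paper's score-equation argument, and an alternative but sound dependence argument for the CLT.
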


Because of Lemmas \ref{lem:larger:variance} and \ref{lem:smaller:variance}, the rate of convergence is $n^{1/2}$ in Theorem \ref{thm:normality}. Furthermore, it is shown in \cite{{Yin1991}} that the asymptotic variance of the MLE of $\theta_0 \sigma_0^2$ is $2 (\theta_0 \sigma_0^2)^2$. Hence, from Lemma \ref{lem:larger:variance}, the WPMLE and WPCMLE have a larger asymptotic variance than the MLE, which is in agreement with the fact that the MLE is generally considered to be the most efficient statistically. Also, the asymptotic variance of the WPMLE and WPCMLE depends on the triangular array of observation points (see Lemma \ref{lemma:approx:tauKn}), while that of the MLE does not. A similar conclusion was obtained in \cite{bachoc2017cross} when considering a cross validation estimator in the same context.

\begin{remark} \normalfont
It can be shown, from Lemma \ref{lemma:for:PL:CL:to:lik} and from Lemma 1 in \cite{Yin1991}, that the WPMLE and WPCMLE asymptotically coincide with the MLE when $K=1$ (that is when $w_1 >0$ and $w_k=0$ for $k \geq 2$) and under the conditions of Theorem \ref{thm:normality}.  When $K=1$, it is easily seen from \eqref{eqtau} that $\tau_{n}^2 = 2 w_1^2+o(1)$ so that, indeed, the WPMLE and WPCMLE have the same asymptotic variance as the MLE. 
\end{remark}

The following lemma shows that for equispaced observation points, the asymptotic variance of the WPMLE and WPCMLE is strictly larger than that of the MLE when one of the $w_k$, $k \geq 2$ is non-zero. 

\begin{lemma} \label{lem:strictly:larger:variance}
	If $s_i = i/n$ for all $n \in \mathbb{N}$ and $i \in \{1,\ldots,n\}$ and if $2 \leq K < + \infty$, then $\mathrm{\lim \rm inf}_{n \to \infty} \tau_{n}^2 > 2 \left(\sum_{i=1}^K w_k\right)^2$.
\end{lemma}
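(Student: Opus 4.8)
The plan is to use Lemma~\ref{lemma:approx:tauKn} (which applies since $K<+\infty$ and $\max_i(s_{i+1}-s_i)=1/n\to0$) to compute the limit of $\tau_n^2$, and then bound that limit from below by Cauchy--Schwarz. First, for equispaced points the factors $1/n$ cancel in \eqref{eq:bijkl}, so $b_{i,j,k,\ell}$ depends on $(i,j,k,\ell)$ only through $p:=j-i$, $q:=\ell-k$ and $m:=k-i$; write $c_{m,p,q}$ for this common value when $i\le k$ and extend it to $m<0$ by $c_{m,p,q}:=c_{-m,q,p}$, consistently with the identity $b_{i,j,k,\ell}=b_{k,\ell,i,j}$. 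Re-indexing the quadruple sum in \eqref{eq:tauKn:equivalent} as $(i,j,k,\ell)\mapsto(i,m,p,q)$ and summing over $i$ first: for each fixed $(m,p,q)$ with $1\le p,q\le K$ the number of admissible $i$ differs from $n$ by an amount depending only on $K$, while $c_{m,p,q}=0$ unless $-(q-1)\le m\le p-1$. This gives
\[
\tau_n^2\longrightarrow\tau^2:=2\sum_{p=1}^{K}\sum_{q=1}^{K}w_pw_q\,C_{p,q},\qquad C_{p,q}:=\sum_{m\in\mathbb{Z}}c_{m,p,q}\ \ (\text{a finite sum}),
\]
so $\liminf_{n\to\infty}\tau_n^2=\tau^2$ and it suffices to prove $\tau^2>2\bigl(\sum_{k=1}^{K}w_k\bigr)^2$.

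The key point is a combinatorial identity for $c_{m,p,q}$: inspecting the three branches of \eqref{eq:bijkl} (and the reflected case $m<0$) one checks that, for all $m\in\mathbb{Z}$,
\[
c_{m,p,q}=\frac{f_m^2}{p\,q},\qquad f_m:=\bigl|\{0,\dots,p-1\}\cap\{m,\dots,m+q-1\}\bigr|,
\]
the overlap of the two length-$p$ and length-$q$ integer windows at offset $m$. Since $\sum_{m\in\mathbb{Z}}f_m=pq$ and $f_m\ne0$ for exactly $p+q-1$ values of $m$, Cauchy--Schwarz gives $(pq)^2=\bigl(\sum_{m:f_m>0}f_m\bigr)^2\le(p+q-1)\sum_m f_m^2$, hence
\[
C_{p,q}=\frac{1}{pq}\sum_{m\in\mathbb{Z}}f_m^2\ \ge\ \frac{pq}{p+q-1},\qquad\text{so}\qquad C_{p,q}-1\ \ge\ \frac{(p-1)(q-1)}{p+q-1}\ \ge\ 0 .
\]

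To conclude, note $2\bigl(\sum_{k=1}^{K}w_k\bigr)^2=2\sum_{p,q=1}^{K}w_pw_q$, so, keeping only the nonnegative term $(p,q)=(K,K)$,
\[
\tau^2-2\Bigl(\sum_{k=1}^{K}w_k\Bigr)^2=2\sum_{p=1}^{K}\sum_{q=1}^{K}w_pw_q\,(C_{p,q}-1)\ \ge\ 2w_K^2(C_{K,K}-1)\ \ge\ \frac{2w_K^2(K-1)^2}{2K-1}\ >\ 0,
\]
using $w_K>0$ (as $K=\max\{i:w_i\ne0\}$ with nonnegative weights) and $K\ge2$; this yields $\liminf_n\tau_n^2>2\bigl(\sum_{k=1}^{K}w_k\bigr)^2$. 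I expect the main effort to lie in the bookkeeping of the first paragraph (boundary indices and the $m<0$ symmetry in the reindexing/limit) and in the case-check $c_{m,p,q}=f_m^2/(pq)$; both are routine. The one genuinely new ingredient is that Cauchy--Schwarz on the overlap profile $f_m$ produces exactly the strictly positive surplus $(p-1)(q-1)/(p+q-1)$ whenever $p,q\ge2$, which is precisely what separates $K\ge2$ from $K=1$. (Alternatively one can compute $C_{p,q}=\min(p,q)-\frac{\min(p,q)^2-1}{3\max(p,q)}$ explicitly and compare, but this is longer.)
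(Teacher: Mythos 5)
Your proof is correct, and it takes a more systematic route than the paper's. The paper reuses the reduction from the proof of Lemma \ref{lem:larger:variance} (it suffices that $\liminf_n \frac{1}{n}\sum_{i}\sum_{j} b_{i,i+k,j,j+k} > 1$ for $2\le k\le K$) and then gets the strict surplus by adding a \emph{single} extra term to the bound already used there: for equispaced points, $\sum_j b_{i,i+k,j,j+k} \ge 1 + b_{i,i+k,i+1,i+k+1} = 1 + (k-1)^2/k^2$, and that is the whole argument. You instead compute the exact limit $\tau_n^2 \to 2\sum_{p,q} w_p w_q C_{p,q}$, observe the clean structural identity $b_{i,j,k,\ell} = (\text{overlap length})^2 / \{(s_j-s_i)(s_\ell-s_k)\}$, which in the equispaced case gives $c_{m,p,q} = f_m^2/(pq)$ with $f_m$ the window-overlap profile (your case check is right; note your $c_{1,k,k}=(k-1)^2/k^2$ is exactly the paper's extra term), and then use Cauchy--Schwarz with $\sum_m f_m = pq$ over the $p+q-1$ nonzero offsets to get the uniform surplus $C_{p,q}-1 \ge (p-1)(q-1)/(p+q-1)$, concluding via $w_K>0$ and $K\ge 2$. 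Your approach buys more: the explicit limit of $\tau_n^2$ and a lower bound for every cross term $(p,q)$, not just the diagonal; the paper's buys brevity, since one explicit off-by-one term suffices. The only places needing care in your write-up — the boundary bookkeeping in the re-indexed limit (each $(m,p,q)$ admits $n-O(K)$ values of $i$, with finitely many triples and $c\le 1$) and the $m<0$ symmetry matching $b_{i,j,k,\ell}=b_{k,\ell,i,j}$ — are handled correctly, so there is no gap.
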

\begin{remark} \normalfont
The sequence of weights $\{w_1,0,\ldots\}$ with $w_1 >0$ provides the smallest asymptotic variance because of the fact that, in this case, the log-likelihood function is asymptotically equal to a sum of pairwise conditional log likelihoods for consecutive observation points (see Lemma 1 in \cite{Yin1991}). This is specific to the exponential covariance function in \eqref{exp:cov}. For more general families of covariance function, it is possible that it is more efficient to use other weight configurations than $\{w_1,0,\ldots\}$.
\end{remark}

\begin{remark} \normalfont
The log likelihood criterion can be evaluated with a $O(n)$ computational cost (see Lemma 1 in \cite{Yin1991}). Hence, here the WPMLE, WPCMLE and MLE have the same computational cost but in general, the likelihood evaluation cost is $O(n^3)$ in time and $O(n^2)$ in storage, while this cost remains $O(n)$ for weighted pairwise likelihood criteria (when $K < + \infty$)
irrespectively of the covariance function.
\end{remark}

\begin{remark} \normalfont
In Sections \ref{sec:cons} and \ref{sec:AN}, we have provided consistency and asymptotic normality results under fixed-domain asymptotics. As discussed after Theorem \ref{thm:consistency:inconsistency:pl}, some of the conclusions obtained differ from those arising from an increasing-domain asymptotic analysis. Hence, given a finite set of observation points and observed values, the question of whether to consider increasing-domain or fixed-domain asymptotics as most relevant is important. 

Let us discuss this question in the general setting of a stationary isotropic Gaussian random field on $\mathbb{R}^d$. Let us consider the correlation distance $\Delta \geq 0$, such that the correlation for two points at distance more than $\Delta$ is expected to be less than a given threshold, for instance $0.05$. Let us also consider that there are $n$ observation points $\bs_1,\ldots,\bs_n$, and consider the diameter $ D =  {\rm max}_{i,j \in \{1,\ldots,n \}} ||\bs_i - \bs_j||$ of the point cloud. Then, a first rule of thumb is to consider that fixed-domain asymptotics is relevant when $n$ is large compared to $(D / \Delta)^d$. Conversely, increasing-domain asymptotics is relevant if $n$ is comparable to $(D / \Delta)^d$. Hence, fixed-domain asymptotics is more relevant when $D$ decreases and $\Delta$ increases and conversely for increasing-domain asymptotics. We also remark that when $n$ is large compared to $(D / \Delta)^d$ and $D / \Delta$ is large, that is the observation domain is large and the observation points are dense in this domain, then a third asymptotic setting called mixed-domain asymptotics \cite{chang2017mixed} can be most relevant, although less theory is available in this case.

Beyond this rule of thumb, the reference \cite{ZhaZim2005} is, to the best of our knowledge, the only one that provides a quantitative comparison of the approximations of finite sample situations, brought by fixed-domain and increasing-domain asymptotics. The authors focus on maximum likelihood estimation of covariance parameters, and consider settings where two asymptotic distributions of the estimators can be obtained from both asymptotic frameworks. In simulations, they show that the fixed-domain asymptotic distributions are closer to the finite sample distributions than the increasing-domain ones, especially for the microergodic covariance parameters.

We can also mention other specific settings where fixed-domain asymptotics is more relevant. When studying the properties of sequential procedures, like in optimization \cite{bect2019supermartingale,vazquez2010convergence}, fixed-domain asymptotics is more natural because the definition domain of the Gaussian process is fixed, and more and more observation points are added sequentially. Similarly, Gaussian processes conditioned by inequality constraints (for instance boundedness or monotonicity) can be studied under fixed-domain asymptotics \cite{lopera17finite,bachoc2018maximum}, but much less so under increasing-domain asymptotics, because the probability that a Gaussian process, defined on an unbounded domain, satisfy these types of constraints is generally zero. 

Hence, generally speaking, it seems that fixed-domain asymptotics is perhaps more often relevant than increasing-domain asymptotics, as pointed out in the reference book \cite{Ste1999}. On the other hand, the obtained theoretical results can be significantly more general, in terms of types of covariance functions and observation point locations, under increasing-domain asymptotics. Thus the joint study of both these frameworks is justified.

Finally, an ideal scenario is when both the increasing and fixed-domain asymptotic frameworks yield practical procedures that coincide, on fixed finite data sets. For instance, in \cite{Pronzato2012}, the asymptotic normal approximation given by the Fisher information matrix is used to derive efficient sampling schemes for covariance parameter estimation and prediction. Here, the Fisher information matrix is computed from finite sample data sets, so that the algorithm is motivated by both the increasing and fixed-domain frameworks. In this example, in order to be robust to the type of asymptotic setting used, we recommend to indeed compute the finite sample Fisher information matrix, rather than its theoretical limit under a given asymptotic framework.
\end{remark}

\section{Numerical experiments \label{sec:exp}}

The main goal of this section is to compare
the finite sample behavior of the WPMLE and WPCMLE of the microergodic
parameter of the exponential covariance model  
in \eqref{exp:cov}
with the asymptotic distributions given in
Theorem 3, using the MLE as a benchmark.

In a first simulation study we consider
a set of points in $[0,1]$ defined by
$s_{i+1}=s_i+0.02/L$ with $s_1=0$ and $s_n=1$. We let $L=1,2,4,8,16$, that is we consider $n=51, 101, 201, 401, 801$ points.

For each $L$, with the Cholesky decomposition, we simulate $5 000$ realizations of a zero mean Gaussian process with covariance     (\ref{exp:cov})
setting
$\sigma_0^2=1$ and $\theta_0=15$.
For each simulation we estimate with MLE, 
WPMLE  and WPCMLE
choosing the 'optimal' weights $w_1=1$ and $w_k=0$ if $k\geq2$.

Optimization was carried out using the \textsf{R} \cite{R:2005} function \texttt{optim}
with the method 
\text{"L-BFGS-B"} \cite{BB95} which allows box constraints, that is each variable can be given a lower and/or upper bound.
Specifically we set
$[a,b] \times [c,d]=[0.01,2500] \times [0.01,5]$.
From Theorems \ref{thm:consistency:inconsistency:pl} and \ref{thm:consistency:inconsistency:plc}, under this setting, both the WPMLE  and WPCMLE are consistent. Using the asymptotic distributions stated in Theorem 3, Table~\ref{tab1} compares
the sample quantiles of order $0.05, 0.25, 0.5, 0.75, 0.95$,
mean, variance and kurtosis of the vector for $i \in \{1,\ldots, 5000\}$
$$\frac{\sqrt{n} }{C_\mathrm{y}}\left( \frac{ \hat{\sigma}_{i,\mathrm{y}}^{2}\hat{\theta}_{i,\mathrm{y}}}{\sigma^{2}_{0}\theta_{0}}-1\right),$$
where $\mathrm{y}=$MLE, 
WPMLE  and WPCMLE
with the
associated theoretical values of the standard Gaussian distribution.
Here  $C^2_{MLE}=2$ and $C^2_{WPMLE }=C^2_{WPCMLE }=\widehat{\tau}^2_{n}/ (\sum_{k=1}^K w_k)^2$
where $\widehat{\tau}^2_{n}$ is computed  using the approximation in \eqref{eq:tauKn:equivalent},
i.e.,
$$
\widehat{\tau}_{n}^2
= 
\frac{2}{n}
\sum_{i=1}^{n-1} \sum_{j=i+1}^{{\rm min}(n,i+K)}
\sum_{k=1}^{n-1} \sum_{\ell=k+1}^{{\rm min}(n,k+K)}
w_{j-i} w_{\ell-k}
b_{i,j,k,\ell}
.$$


\begin{table}
		\caption{Sample quantiles, mean, variance and kurtosis of   $(\sqrt{n} / C_\mathrm{y})\left(  \hat{\sigma}_{i,\mathrm{y}}^{2}\hat{\theta}_{i,\mathrm{y}} / \sigma^{2}_{0}\theta_{0}-1\right),\quad i \in \{1,\ldots,5000\}$ for $\mathrm{y}=$ WPMLE, WPCMLE, MLE
		when $n=51, 101, 201, 401, 801$, compared with the associated theoretical
		values of the standard Gaussian distribution.}\label{tab1}
	\begin{footnotesize}
		\begin{center}
		\begin{tabular}{|c|c| c| c| c| c| c| c| c| c| }
			\hline
		Method	&$n $ & $5\%$ & $25\%$ & $50\%$ & $75\%$ & $95\%$ &mean&var&Kur\\      
			\hline
			\multicolumn{10}{c}{} \\
			 \hline
			\multirow{5}{*}{WPMLE} & 51 &-1.6186 &-0.7006 & 0.0264  &0.8950 & 2.3767 &0.1703& 1.5644 &1.6348 \\        
		&	101 &  -1.5899 &-0.6864 & 0.0278 & 0.8043 & 2.0501 & 0.1075 & 1.2520 & 0.2967\\      
		&	201 &  -1.5733& -0.6702& -0.0029&  0.7400& 1.8353 & 0.0497 & 1.0862 &0.0933 \\       
		&	401&   -1.6121& -0.6767&  0.0171 &  0.7275 & 1.7956 & 0.0419& 1.0635& -0.0321 \\      
		&	801&   -1.6275 &-0.6728 & 0.0099&  0.7229 & 1.7001& 0.0303 &1.0350& -0.0133 \\      
						\hline
			\multicolumn{10}{c}{} \\
			 \hline
						\multirow{5}{*}{WPCMLE} & 51  &-1.6186 &-0.7006 & 0.0264  &0.8950 & 2.3767 &0.1703 &1.5644& 1.6348\\        
			&	101  &-1.5899 &-0.6864 & 0.0278 & 0.8043 & 2.0501 & 0.1075 & 1.2520 &0.2967\\      				&	201   &-1.5733& -0.6702& -0.0029&  0.7400& 1.8353 & 0.0497 & 1.0862 &0.0933\\      				&	401   &-1.6121& -0.6767&  0.0171 & 0.7275 & 1.7956 & 0.0419& 1.0635 &-0.0321 \\     				&	801  & -1.6275 &-0.6728 & 0.0099&  0.7229 & 1.7001& 0.0303 &1.0350& -0.0133 \\    
					\hline
			\multicolumn{10}{c}{} \\
			 \hline
	\multirow{5}{*}{MLE} & 51 & -1.5540 &-0.7043 & -0.0585  & 0.6733 & 1.8372 &0.0197 &1.0891 &0.3874  \\      
			&	101 &-1.5556 &-0.7173 &-0.0289 & 0.6759 & 1.8059 & 0.0215 & 1.0687 &0.0327\\     
			&	201  &-1.5702& -0.7038& -0.0476 & 0.6478 & 1.7007 & -0.0034 &1.0062 &0.0316 \\					&	401  &-1.6194& -0.6909&-0.0254 & 0.6770 & 1.7012 & 0.0048 &1.0288 &-0.0607 \\     	
			&	801 & -1.6226 &-0.6886 &-0.0074&  0.6880 & 1.6634& 0.0052 &1.0195& -0.0049\\      
			\hline
			\multicolumn{10}{c}{} \\
			 \hline		
			\multicolumn{2}{|c|}{$\mathcal{N} (0,1)$}&	-1.6448& -0.6745&  0.0000&  0.6745&  1.6448 &    0 &       1  &     0\\
			\hline	
		\end{tabular}
	\end{center}
	\end{footnotesize}
\end{table}

Under the optimal weights setting and following Remark 1 we have  $C^2_{WPMLE}=C^2_{WPCMLE} \approx 2$ as in the MLE case.
\begin{center}
\begin{figure}
	\includegraphics[width=0.33\textwidth]{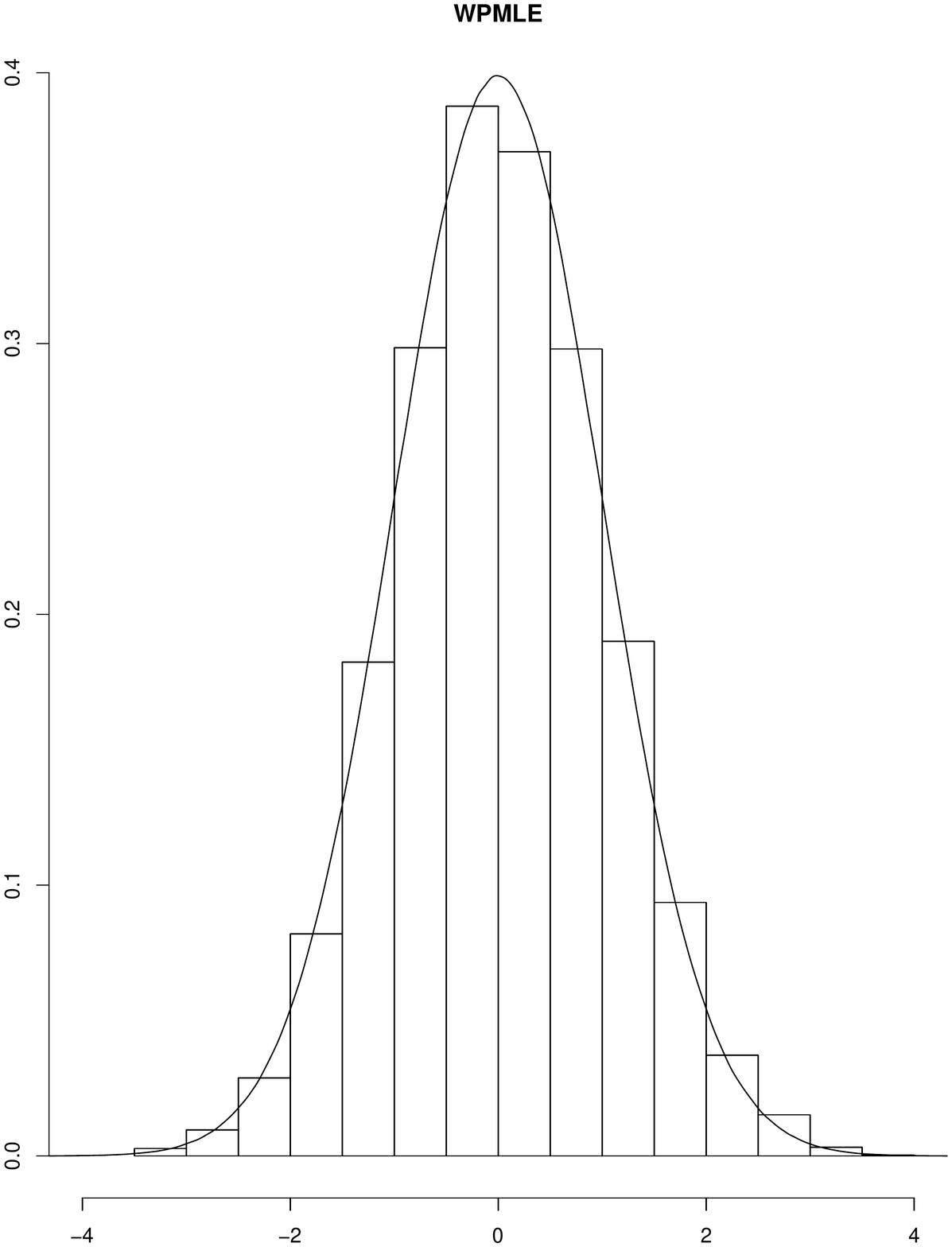}%
	\hfill%
	\includegraphics[width=0.33\textwidth]{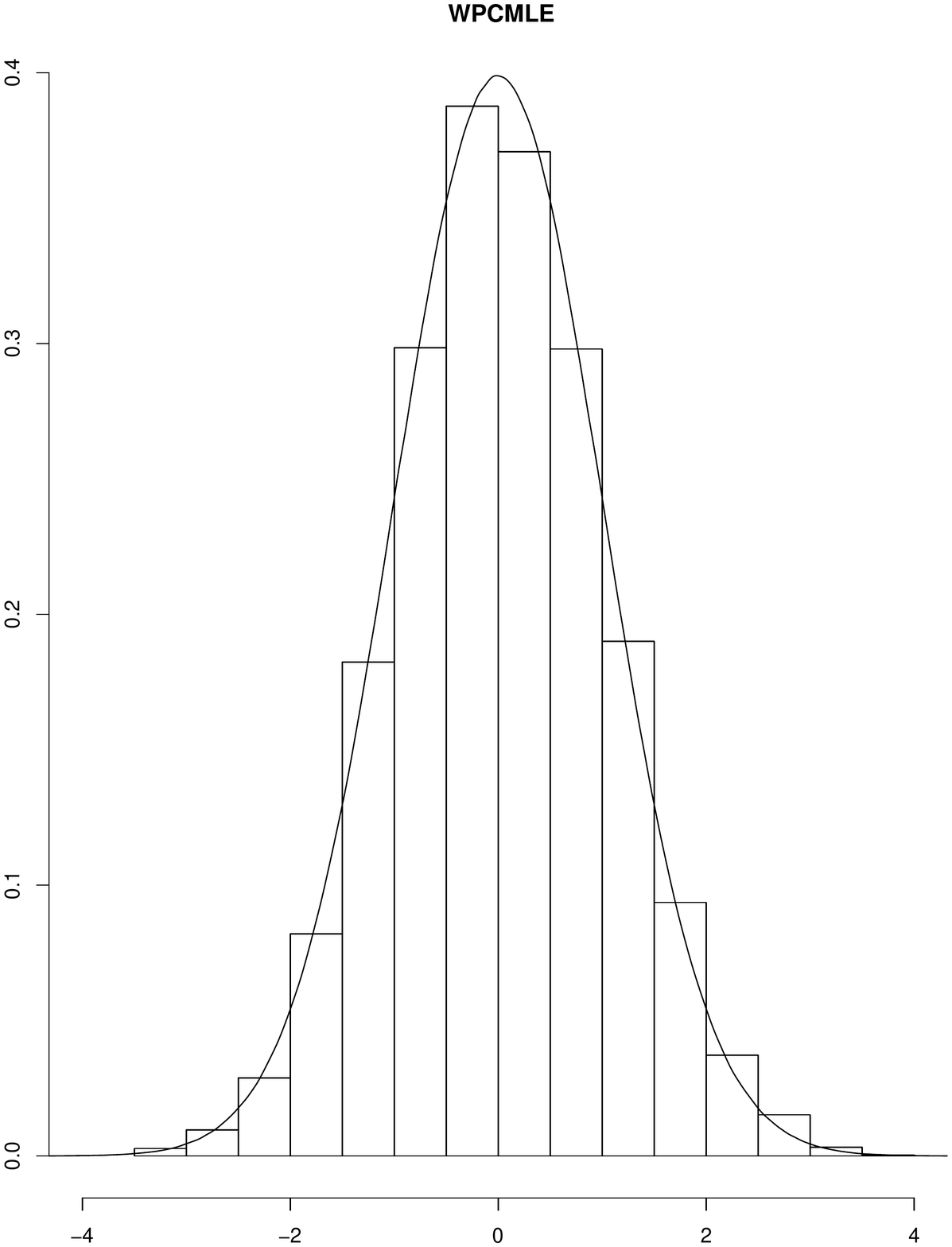}%
	\hfill%
	\includegraphics[width=0.33\textwidth]{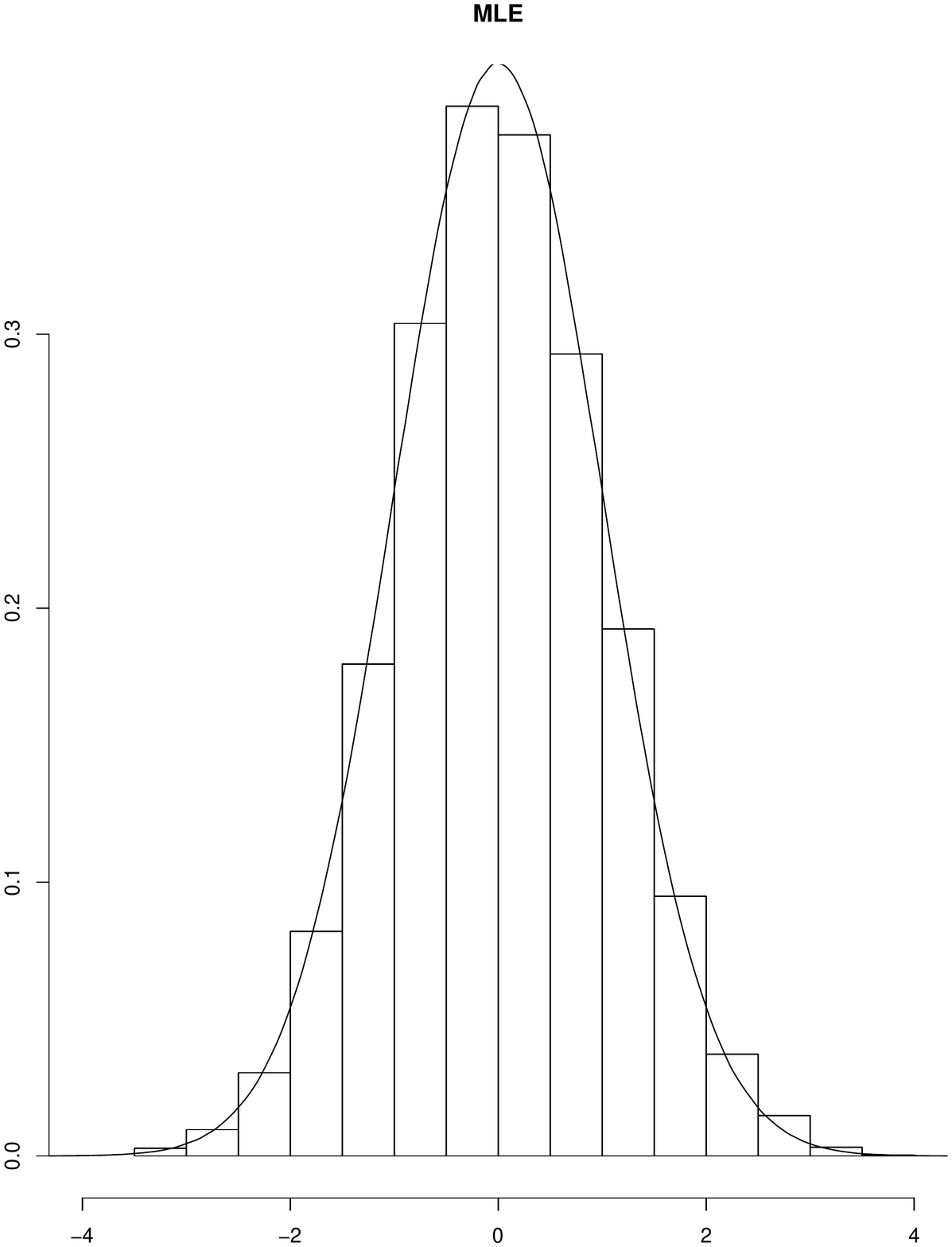}
	\caption{Comparison of the histogram of 
		$(\sqrt{n} / C_\mathrm{y}) \left(  \hat{\sigma}_{i,\mathrm{y}}^{2}\hat{\theta}_{i,\mathrm{y}} / \sigma^{2}_{0}\theta_{0} -1\right),\quad i \in \{1,\ldots,5000\}$ for $\mathrm{y}=$ WPMLE, WPCMLE, MLE (from left to right)
		with the standard Gaussian distribution when $n=801$.}\label{cdf}
\end{figure}
\end{center}

It is apparent from Table~\ref{tab1} that the asymptotic distribution given in  Theorem 3 is a satisfactory approximation of the sample
distribution, visually improving when increasing $n$.
It can be appreciated that the speed of convergence  is  slightly faster for MLE
and that the 
asymptotic behavior of the WPMLE and WPCMLE is similar.
Fig. 1 depicts a graphical comparison when $n=801$.

In a second simulation study we focus only on the asymptotic variance 
$n^{-1}(\sigma_0^2\theta_{0}C_{\mathrm{y}})^2$ and we compare it
with the sample variance of the WPMLE, WPCMLE and MLE of the microergodic parameter, under the same setting as in the first numerical experiment.
In this case we consider increasing values of both $K$ and $n$
that is we consider also non optimal weights.
The results are depicted in  Table \ref{tab3}.
For fixed $K$ 
it can be appreciated that, as expected,  the difference
between the 
sample  and theoretical variance is reduced when increasing $n$.
On the other hand, for fixed $n$, both asymptotic and simulated variances increase when increasing $K$.
Overall, as expected,  the asymptotic variance slightly underestimates the simulated variance.
Finally note that when $K\neq 1$ then the speed of convergence for the WPCMLE
is slightly faster than for the WPMLE.


Let us conclude this section with a remark. For a fixed $\theta$, the global minimizer of ${p\ell}_{n}(\theta,\sigma^2)$ and of ${pc\ell}_{n}(\theta,\sigma^2)$, over $\sigma^2 \in (0,+\infty)$, can be expressed explicitly. Hence, from a computational point of view, it is possible to compute explicitly ${\rm min_{\sigma^2 \in (0,+\infty)}} {p\ell}_{n}(\theta,\sigma^2)$ and ${\rm min_{\sigma^2 \in (0,+\infty)}} {pc\ell}_{n}(\theta,\sigma^2)$. These two latter functions can be minimized numerically over $\theta$ only, which is computationally beneficial. However, this case corresponds to the case (iii) in Theorem \ref{thm:consistency:inconsistency:pl} so the WPMLE would be inconsistent in this setting.

\begin{table}
		\caption{Asymptotic variance  versus sample variance of the WPMLE, WPCMLE and MLE of the microergodic parameter when increasing $n$ and $K$.}\label{tab3}
	\begin{center}
		\begin{footnotesize}
			\begin{tabular}{|c|c|c|c|c|c|c|}
				\hline
				& $K$ &$n=$51 &$n=$101&$n=$201&$n=$401&$n=$801\\
				\hline
			\multicolumn{7}{c}{} \\
			 \hline
				Sample Variance  MLE& * &10.0650&4.8688  & 2.2809& 1.1523 & 0.5749 \\
				Asymptotic Variance &\multirow{3}{*}{}&8.8230& 4.4554&2.2388&1.1221&0.5618\\
				
			\hline
			\multicolumn{7}{c}{} \\
			 \hline
				Sample Variance WPMLE&1&13.5326&  5.5227&2.4197&  1.1905& 0.5807 \\
				Sample Variance WPCMLE&&13.5326& 5.5227&2.4197   &1.1905  & 0.5807 \\
				Asymptotic Variance && 8.6505&4.4113 &2.2277&1.1193&0.5611\\
				\hline
			\multicolumn{7}{c}{} \\
			 \hline
				Sample Variance WPMLE&10&33.5086& 17.9724&  7.9901  &3.6650 &1.7723  \\
				Sample Variance WPCMLE&&33.1477&17.7529 &  7.9465&  3.6605& 1.7720  \\
				Asymptotic Variance &&  22.2798 &12.3865  & 6.5138  &3.3382  &  1.6895 \\
				\hline
			\multicolumn{7}{c}{} \\
			 \hline
				
				Sample Variance WPMLE &20&36.1910 &24.7919 & 15.0243&7.1731  & 3.3934 \\
				Sample Variance WPCMLE&& 35.9582 &24.4611 &   14.8164 &7.1312  &  3.3894\\
				Asymptotic Variance && 32.9936     & 20.7760 & 11.5892  & 6.0239  & 3.0823    \\
					\hline
			\multicolumn{7}{c}{} \\
			 \hline
				Sample Variance WPMLE&30&  36.6338&   26.7294 & 19.7646&10.8759 &5.1132 \\
				Sample Variance WPCMLE&& 36.4643&   26.4491& 19.4406 &  10.7569 &5.0969\\
				Asymptotic Variance  &  &36.4154    &25.3444   & 16.0245 &8.6089 &  4.4547\\
				\hline
				
			\end{tabular}
		\end{footnotesize}
		\end{center}
\end{table}

\section{Conclusions\label{sec:conc}}

In this paper, we provide, to the best of our knowledge, the first fixed-domain asymptotic properties of weighted pairwise likelihood estimators. We consider the exponential covariance function in dimension one. This enables us to exploit the results of \cite{Yin1991}. By means of these results, and of specific new proof techniques, the following conclusions are obtained. The WPMLE can be inconsistent, if the range of allowed values for the variance is too large. This is in contrast with the increasing-domain asymptotic situation, and allows us to issue a warning for practical use. On the other hand, the WPCMLE is always consistent. In case of consistency, both these estimators are asymptotically Gaussian distributed, with an asymptotic variance that is larger than or equal to that of the MLE. This variance depends on the observation point locations and on the weight configurations.

Some of the conclusions obtained in this paper are specific to the exponential family of covariance functions. In particular, the MLE is here asymptotically equal to a WPCMLE based only on pairs of consecutive observation points.
On the other hand, the inconsistency of the WPMLE that we have pointed out is not specific to the exponential covariance function. Indeed, \ref{appendix:inconsistency} provides inconsistency results for the WPMLE for general covariance functions.

It would be interesting to study the consistency of the WPCMLE and its asymptotic normality for more general families of covariance functions than the exponential one, and in dimension larger than one. 
The present paper constitutes a first step in this more general direction. 
We remark that, in the maximum likelihood case, general results on fixed-domain asymptotics and for $d=1,2,3$ can be found in \cite{ShaKau2013} and  \cite{Zha2004}   for the Mat{\'e}rn model and in  \cite{Bevilacqua_et_al:2018} for the Generalized Wendland model.

As a referee pointed out, other types of CL functions have been proposed in the literature for the estimation of the covariance functions of  Gaussian processes \cite{Stein2004,Eidsviketal2014,kagugo2018}.
\cite{Lindsay:Yi:Sun:2011} highlights that, for a given estimation problem, the choice of a suitable CL function should be driven by statistical and computational considerations.
In this paper we focused only on  CL based in pairs since, from our perspective, there is a clear computational advantage when  considering CL  based only on pairs of observations.
Moreover CL methods based on pairs are more robust (in the sense of \cite{Varin:Reid:Firth:2011} and \cite{XU247})
with respect to other types of CL functions. Indeed, they require only the correct specification of bivariate random vectors.
Nevertheless, we believe that for a one dimensional Gaussian process with exponential covariance function, the results in our paper and in \cite{Yin1991}
are the starting points to study the consistency and the asymptotic distribution of other types of CL estimators under fixed-domain asymptotics.

Finally, it would be interesting to study the fixed-domain asymptotic properties of CL counterparts of the model selection criteria of \cite{chang2014asymptotic}, that are based on the full likelihood.

\section*{Acknowledgments}

The research work conducted by Moreno Bevilacqua  was supported in
part  by FONDECYT grant 1160280, Chile
and by Millennium
Science Initiative of the Ministry
of Economy, Development, and
Tourism, grant "Millenium
Nucleus Center for the
Discovery of Structures in
Complex Data".

We thank the editor in chief, the associate editor and two anonymous referees for their constructive feedback and helpful suggestions.

\appendix

\section{Proofs for Sections \ref{sec:estimators} to \ref{sec:AN}}
\label{appendix:proofs}

In the proofs, we let $0 <C_{\rm inf} < + \infty$ and $0 < C_{\rm sup} < + \infty$ be generic constants, which values can change from line to line.

\begin{proof}[{\bf Proof of Lemma \ref{lemma:for:PL:CL:to:lik}}]
	
	The principle of the proof is a change of indexation of the pairs of the form $s_i,s_j$, $i < j$. Instead of indexing such a pair by $i$ and $j$, we index it by $j-i$ and by the remainder and quotient of the Euclidean division of $i$ by $j-i$. We have
	\begin{align*}
	{p\ell}_n ( \psi ) 
	& = 
	\sum_{i=1}^{n-1}
	\sum_{j=i+1}^n
	w_{i,j}
	\ell_{s_i,s_j}(\psi)
	=
	\sum_{k=1}^{n-1}
	\sum_{i=1}^{n-1}
	w_{k}
	\mathbf{1}_{ i+k \leq n }
	\ell_{s_i,s_{i+k}}(\psi)
	\\
	& =
	\sum_{k=1}^{n-1}
	w_{k}
	\sum_{a=0}^{k-1}
	\sum_{j=0}^n
	\mathbf{1}_{ 1+a+jk+k \leq n }
	\ell_{s_{1+a+jk},s_{1+a+jk+k}}(\psi)
	=
	\sum_{k=1}^{n-1}
	w_{k}
	\sum_{a=0}^{k-1}
	\sum_{j=0}^{ \lfloor \frac{n-1-a-k}{  k} \rfloor }
	\ell_{ x^{(k,a)}_{j} , x^{(k,a)}_{j+1} }(\psi).
	\end{align*}
	The proof of \eqref{eq:pairwise:cond:lik:to:lik} is the same, after observing that
	\[
	\sum_{i=1}^{n} \sum_{\substack{j =1 \\ j \neq i}}^n
	w_{i,j} \ell_{ s_j | s_i }(\psi)
	=
	\sum_{i=1}^{n-1} \sum_{j = i+1}^n
	w_{i,j}	
	\left\{
	\ell_{ s_j | s_i }(\psi)
	+
	\ell_{ s_i | s_j }(\psi)
	\right\}.
	\]\\
\end{proof}

\begin{lemma} \label{lem:constr:GP}
	Let $m$ and $M$ be fixed with $ - \infty \leq m  < M \leq + \infty$. Then, we have, with probability $\Pr >0$ (not depending on $n$),
	\[
	\forall t \in [0,1], m \leq Z(t) \leq M.
	\]
\end{lemma}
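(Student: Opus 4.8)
The plan is to realize $Z$ as a random element of the separable Banach space $C([0,1])$ with the supremum norm, and then to observe that the target event contains an entire sup‑norm ball whose centre is a cleverly chosen constant function; the positivity of the probability of such a ball will follow from the classical description of the topological support of a Gaussian measure. First I would note that the Ornstein–Uhlenbeck process has a version with continuous sample paths: from \eqref{exp:cov}, $E[(Z(s)-Z(t))^2] = 2\sigma_0^2(1-e^{-\theta_0|s-t|}) \le 2\sigma_0^2\theta_0|s-t|$, so Kolmogorov's continuity criterion applies. Working with this version, the law $\mu$ of $Z$ is a centred Gaussian Radon measure on $C([0,1])$, and the event in the statement is measurable.

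Next, since $m<M$, I would fix a real number $c$ in the nonempty open interval $(m,M)$ (e.g. $c=(m+M)/2$ when both endpoints are finite, $c=m+1$ if $M=+\infty$, $c=M-1$ if $m=-\infty$, $c=0$ if both are infinite) together with a radius $r>0$ small enough that $(c-r,c+r)\subset(m,M)$. Writing $c$ also for the constant function $t\mapsto c$ in $C([0,1])$, the open ball $\{f\in C([0,1]) : \|f-c\|_\infty < r\}$ is contained in $\{f : m\le f(t)\le M \text{ for all } t\in[0,1]\}$. Hence it suffices to prove that this ball has positive $\mu$‑measure, since then $\Pr := \mu(\{\|f-c\|_\infty<r\}) \le \Pr(\forall t\in[0,1],\ m\le Z(t)\le M)$.

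Finally, I would invoke the fact that the topological support of a centred Gaussian Radon measure on a separable Banach space is the closure of its Cameron–Martin space, together with the fact that the Cameron–Martin space associated with the exponential covariance kernel $K(s,t)=\sigma_0^2 e^{-\theta_0|s-t|}$ on $[0,1]$ is the first‑order Sobolev space $H^1([0,1])$, which in particular contains all constant functions. Consequently $c$ lies in the support of $\mu$, so every open neighbourhood of $c$, and in particular the ball above, has positive $\mu$‑measure; this gives $\Pr>0$. Since the law of $Z$ (hence $\mu$, $c$ and $r$) depends only on $\theta_0,\sigma_0^2,m,M$ and not on the observation points, $\Pr$ does not depend on $n$, as required.

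The one mildly delicate point is the last step, namely identifying enough of the Cameron–Martin space of the exponential kernel. If one prefers to avoid quoting its structure, an equivalent route is to show directly that $\mu$ has full support $C([0,1])$: it is enough to check that the linear span of $\{K(\cdot,\tau):\tau\in[0,1]\}$ is dense in $C([0,1])$, equivalently that a finite signed measure $\nu$ on $[0,1]$ with $\int_0^1 K(t,\tau)\,d\nu(t)=0$ for all $\tau\in[0,1]$ must vanish, which follows because the exponential kernel is (up to constants) the Green function of $\theta_0^2 - \frac{d^2}{dt^2}$; alternatively one can reduce to the two classical facts that a non‑degenerate finite‑dimensional Gaussian vector charges every box of $\mathbb{R}^N$ and that a centred continuous Gaussian process satisfies $\Pr(\sup_{[0,1]}|G|<\varepsilon)>0$, applied to a fine grid and to the (centred) conditional fluctuation of $Z$ around its conditional mean given the grid values.
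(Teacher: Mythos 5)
Your argument is correct, but it is a genuinely different route from the paper's: the paper disposes of this lemma in one line by invoking it as a special case of Lemma A.3 in \cite{lopera17finite}, whereas you give an essentially self-contained proof. Your chain of reasoning is sound: the bound $\mathrm{E}[(Z(s)-Z(t))^2]\leq 2\sigma_0^2\theta_0|s-t|$ combined with Gaussian moment equivalence gives a continuous version via Kolmogorov's criterion; the event $\{\forall t,\ m\leq Z(t)\leq M\}$ contains a sup-norm ball around a constant $c\in(m,M)$; and positivity of the measure of that ball follows from the support theorem for centred Gaussian measures together with the classical identification of the Cameron--Martin space of the exponential kernel with $H^1([0,1])$, which contains constants (the degenerate case $m=-\infty$, $M=+\infty$ is trivial, and your choice of $c$ handles the one-sided infinite cases). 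The statement that $\Pr$ is independent of $n$ is immediate in both treatments since the law of $Z$ does not involve the design points. What the paper's citation buys is brevity and uniformity (the cited lemma covers general continuous-path Gaussian processes and general convex constraint sets, and is reused in \ref{appendix:inconsistency} under the stated assumption on $C$); what your proof buys is transparency about the mechanism — support equals the closure of the Cameron--Martin space — and independence from the external reference. Your fallback routes (density of the span of kernel sections via the Green-function property, or the grid-plus-small-ball argument) are also viable, though the grid version would need a little extra care to control the conditional mean between grid points; since your main argument is complete, this is not a gap.
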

\begin{proof}[{\bf Proof}]
	The lemma is a special case of Lemma A.3 in \cite{lopera17finite}.
\end{proof}

\begin{proof}[{\bf Proof of Theorem \ref{thm:consistency:inconsistency:pl}}]
	Let us first consider the case (i). We have, from Lemma \ref{lemma:for:PL:CL:to:lik}
	\begin{eqnarray} \label{eq:reindexing:pl}
	{p\ell}_{n}(\psi)
	& = &  
	\sum_{k=1}^{n-1}
	w_{k}
	\sum_{a=0}^{k-1}
	\sum_{j=0}^{ \lfloor \frac{n-1-a-k}{  k} \rfloor }
	\ell_{ x^{(k,a)}_{j} ,x^{(k,a)}_{j+1} }(\psi)
	\notag \\ 
	& =&
	\sum_{k=1}^{K}
	w_{k}
	\sum_{a=0}^{k-1}
	\sum_{j=0}^{ \lfloor \frac{n-1-a-k}{  k} \rfloor }
	\left(
	\ln(\sigma^2) + \frac{1}{\sigma^2} 
	Z \left( x^{(k,a)}_{j} \right)^2	 	 
	\right) + \sum_{k=1}^{K}
	w_{k}
	\sum_{a=0}^{k-1}
	\sum_{j=0}^{ \lfloor \frac{n-1-a-k}{  k} \rfloor }
	\ell_{ x^{(k,a)}_{j+1} |x^{(k,a)}_{j} }(\psi).
	\end{eqnarray}
	We observe that, for any $k \in \{ 1,\ldots,K\}$ and $a \in \{0,\ldots,k-1\}$ the points $x^{(k,a)}_{0},\ldots,x^{(k,a)}_{n_{k,a}}$, with $n_{k,a} = \lfloor (n-1-a-k)/ k \rfloor = (n/k) (1 + o(1))$ satisfy the conditions of the setting of \cite{Yin1991} (they have increasing two-by-two distinct values and are restricted to $[0,1]$). Furthermore, the set $\{ k \in \{1,\ldots,K\}, a  \in \{ 0,\ldots,k-1\} \}$ is finite with cardinality not depending on $n$. Hence it can be seen that the proof of Theorem 1 in \cite{Yin1991} (see in particular the offline equation at the top of Page 289 and Lemma 4) leads to 
	{\small
		\begin{flalign*}
		\sum_{k=1}^{K}
		w_{k}
		\sum_{a=0}^{k-1}
		\sum_{j=0}^{ \lfloor \frac{n-1-a-k}{  k} \rfloor }
		\ell_{ x^{(k,a)}_{j+1} |x^{(k,a)}_{j} }(\psi)
		-
		\sum_{k=1}^{K}
		w_{k}
		\sum_{a=0}^{k-1}
		\sum_{j=0}^{ \lfloor \frac{n-1-a-k}{  k} \rfloor }
		\ell_{ x^{(k,a)}_{j+1} |x^{(k,a)}_{j} }(\bar{\psi})
		=
		\left[
		\sum_{k=1}^{K}
		w_{k}
		\sum_{a=0}^{k-1}
		(n/k)
		\left\{
		\frac{ \theta_0 \sigma_0^2 }{ \theta \sigma^2 }
		-	 \frac{ \theta_0 \sigma_0^2 }{ \bar{\theta} \bar{\sigma}^2 } 
		+ \ln(\theta \sigma^2)	  
		- \ln (	  \bar{\theta} \bar{\sigma^2} )
		\right\}
		\right]
		+r_{1,\psi,\bar{\psi}},
		\end{flalign*}}
	where $\bar{\psi} = ( \bar{\theta} , \bar{\sigma}^2 )$
	and where ${\rm sup}_{\bar{\psi},\psi \in J} | r_{1, \psi , \bar{\psi}} | = o(n)$ almost surely. Furthermore, since $Z$ is almost surely continuous on $[0,1]$, we have almost surely
	{\normalsize
		\begin{flalign*}
		\sum_{k=1}^{K}
		w_{k}
		\sum_{a=0}^{k-1}
		\sum_{j=0}^{ \lfloor \frac{n-1-a-k}{  k} \rfloor }
		\left(
		\ln(\sigma^2) + \frac{1}{\sigma^2}
		Z \left( x^{(k,a)}_{j} \right)^2	 	 
		\right)= 
		n
		\left\{
		\sum_{k=1}^{K}
		w_{k}	
		\left(
		\ln(\sigma^2)
		+
		\frac{1}{\sigma^2}
		\frac{1}{n} \sum_{i=1}^n Z(s_i)^2	
		\right)
		\right\}
		+
		r_{2,\psi,\bar{\psi}},&
		\end{flalign*}}
	where ${\rm sup}_{\bar{\psi},\psi \in J} | r_{2, \psi , \bar{\psi}} | = o(n)$ almost surely. Hence, almost surely,
	as $n \to \infty$, we have
	{\normalsize
		\begin{flalign*}
		& {p\ell}_{n}(\psi)
		- 
		{p\ell}_{n}(\bar{\psi})
		= n
		\left(
		\sum_{k=1}^K
		w_k
		\right) 
		\left\{
		\frac{ \theta_0 \sigma^2_0 }{ \theta \sigma^2 } - 
		\frac{ \theta_0 \sigma^2_0 }{ \bar{\theta} \bar{\sigma}^2 } 
		+ \ln(\theta \sigma^2)	  
		- \ln(	  \bar{\theta} \bar{\sigma^2} )\right.&\\ 
		& \quad \quad \quad \quad \quad \quad \quad \quad \left.+ \ln(\sigma^2)+
		\frac{1}{\sigma^2}
		\left(
		\frac{1}{n} \sum_{i=1}^n Z(s_i)^2	
		\right)
		- \ln(\bar{\sigma}^2) -
		\frac{1}{\bar{\sigma}^2}
		\left(
		\frac{1}{n} \sum_{i=1}^n Z(s_i)^2	
		\right)
		\right\}
		+r_{3,\psi,\bar{\psi}}, & 
		\end{flalign*}}
	where ${\rm sup}_{\bar{\psi},\psi \in J} | r_{3, \psi , \bar{\psi}} | = o(n)$ almost surely.
	Let $g(u) = (\theta_0 \sigma_0^2) / u  + \ln(u)$,
	$  S = (1 / n) \sum_{i=1}^n Z(s_i)^2	$ and $g_S(u) = \ln(u) + S/u$. We have 
	{\normalsize
		\begin{equation}  \label{eq:for:cons:asymptotic:pl}
		{p\ell}_{n}(\psi)
		- 
		{p\ell}_{n}(\bar{\psi})
		= n
		\left(
		\sum_{k=1}^K
		w_k
		\right)
		\left\{
		g(\theta \sigma^2)
		-
		g(\bar{\theta} \bar{\sigma}^2)
		+
		g_S(\sigma^2)
		- 
		g_S(\bar{\sigma}^2)
		\right\}
		+ r_{3,\psi,\bar{\psi}} .
		\end{equation}}
	We have
	\[
	0
	<
	C_{\rm inf}
	\leq
	{\rm inf}_{ u \in [ ac , bd ] }  g(u) 
	\leq
	{\rm sup}_{ u \in [ ac , bd ] }  g(u) 
	\leq
	C_{\rm sup}
	<
	\infty.
	\]
	Consider now the subcase of (i) where $ad > \theta_0 \sigma_0^2$. Let $\epsilon >0$, and consider $\theta,\sigma^2$ so that $|\theta \sigma^2 - \theta_0 \sigma_0^2| \leq \epsilon$. We have
	\[
	\sigma^2
	\leq
	\frac{\theta_0 \sigma_0^2}{ \theta }
	+
	\frac{\epsilon}{\theta}
	\leq
	\frac{\theta_0 \sigma_0^2}{a}
	+
	\frac{\epsilon}{a}.
	\]
	Hence, for $\epsilon > 0 $ small enough there exists a fixed $d' < d$ so that, for $|\theta \sigma^2 - \theta_0 \sigma_0^2| \leq \epsilon$ we have $\sigma^2 \leq d'$.
	It follows that, with $\psi_1 = (b,d)$,
	from \eqref{eq:for:cons:asymptotic:pl}
	
	\begin{flalign*}
	\frac{1}{n \sum_{k=1}^K w_k}
	\mathrm{\lim \rm inf}_{n \to \infty}
	\left\{
	{\rm inf_{ \psi \in J ;  | \theta \sigma^2 - \theta_0 \sigma_0^2  | \leq \epsilon }}
	{p\ell}_{n} (\psi)
	-
	{p \ell}_n( \psi_1 )
	\right\} 
	& \geq 
	C_{\rm inf}
	+
	\ln(c)
	+
	\frac{1}{d'}
	S
	-
	C_{\rm sup}
	- \ln(d) 
	- \frac{1}{d}
	S 
	 \\
	&
	=
	\left( \frac{1}{d'}
	- \frac{1}{d}
	\right)
	S
	+ 
	C_{\rm inf} - C_{\rm sup} + \ln(c) - \ln(d).
	\end{flalign*}

	From Lemma \ref{lem:constr:GP}, we can show that, with probability $\Pr  >0$ (not depending on $n$)
	\[
	{\rm inf}_{t \in [0,1]} Z^2(t)
	\geq
	2
	\frac{
		- C_{\rm inf} + C_{\rm sup} - \ln(c) + \ln(d)
	}{
		\frac{1}{d'} - \frac{1}{d}
	}.
	\]

	Hence, with probability $\Pr  >0$ we have, for fixed $\epsilon >0$
	\begin{equation} \label{eq:cons:i:one}
	\frac{1}{n \sum_{k=1}^K w_k}
	\mathrm{\lim \rm inf}_{n \to \infty}
	\left\{
	{\rm inf}_{ \psi \in J ;  | \theta \sigma^2 - \theta_0 \sigma_0^2  | \leq \epsilon }
	{p\ell}_{n} (\psi)
	-
	{p\ell}_n( \psi_1 )
	\right\} 
	>0.
	\end{equation}
	This implies that $\hat{\theta}_{{p\ell}} \hat{\sigma}^{2}_{{p\ell}}$ does not converge in probability to $\theta_0 \sigma_0^2$ in the case where $ad > \theta_0 \sigma_0^2$.
	
	Let us now consider the subcase of (i) where $ bc < \theta_0 \sigma_0^2 $. Let $\epsilon >0$, and consider $\theta,\sigma^2$ so that $|\theta \sigma^2 - \theta_0 \sigma_0^2| \leq \epsilon$. We have
	\[
	\sigma^2
	\geq
	\frac{\theta_0 \sigma_0^2}{ \theta }
	-
	\frac{\epsilon}{\theta}
	\geq
	\frac{\theta_0 \sigma_0^2}{b}
	-
	\frac{\epsilon}{a}.
	\]
	Hence, for $\epsilon > 0 $ small enough there exists a fixed $c' > c$ so that, for $|\theta \sigma^2 - \theta_0 \sigma_0^2| \leq \epsilon$ we have $\sigma^2 \geq c'$. One can check that the function $g_S$ in \eqref{eq:for:cons:asymptotic:pl} has a second derivative which is negative for $t \geq 2S$. Hence, if $\epsilon \leq c'/4$ and $S \leq c'/4$ we have, since $\sigma^2 \geq c'$
	\begin{align*}
	g_S(\sigma^2) - g_S(\sigma^2 - \epsilon)
	\geq 
	\epsilon \, {\rm inf}_{ t \in [c' - \epsilon , d] } g_S'(t)
 \geq 
	\epsilon g_S'( d ) 
\geq 
	\epsilon \left(  \frac{1}{d} - \frac{S}{d^2} \right) 
 \geq
	\frac{\epsilon}{2d}
	\end{align*}
	if we further assume that $S \leq d/2$. 
	Also, we have (still considering $|\theta \sigma^2 - \theta_0 \sigma_0^2| \leq \epsilon$)
	\begin{align*}
	\left|
	g( \theta \sigma^2 )
	- 
	g( \theta (\sigma^2 - \epsilon) )
	\right|\leq
	b \, \epsilon \, {\rm sup}_{ t \in [ \theta_0 \sigma_0^2 + \epsilon , \theta_0 \sigma_0^2 - \epsilon - b \epsilon  ] }
	| g'(t) | 
	\ =
	b \, \epsilon \, {\rm sup}_{ t \in [ \theta_0 \sigma_0^2 + \epsilon , \theta_0 \sigma_0^2 - \epsilon - b \epsilon  ] }
	\left|
	\frac{1}{t} \left( 1 - \frac{\sigma_0^2 \theta _0}{t} \right) 
	\right| 
	\leq
	A_{\rm sup} \epsilon^2
	\end{align*}
	when $\epsilon \leq \nu$, where $A_{\rm sup} < \infty$ and $\nu >0$ can be chosen so as to depend only on $\theta_0,\sigma_0^2,b$. Hence, we have, for $\epsilon \leq c'/4$, $\epsilon \leq \nu$, $S \leq c'/4$ and $S \leq d/2$,
	\[
	g( \theta \sigma^2 ) - g( \theta \{ \sigma^2 - \epsilon \} )
	+ 
	g_S(\sigma^2) - g_S(\sigma^2 - \epsilon)
	\geq 
	\frac{\epsilon}{2d}
	-
	A_{\rm sup} \epsilon^2.
	\]
	From Lemma \ref{lem:constr:GP}, we can show that with probability $\Pr >0$, not depending on $n$, we have ${\rm sup}_{t \in [0,1]} Z^2(t) \leq  \rm min(c' / 4,d/2)$. 
	Hence, we can choose $\epsilon >0$, not depending on $n$ so that, from \eqref{eq:for:cons:asymptotic:pl}, with probability $\Pr > 0$
	\begin{equation} \label{eq:cons:i:two}
	\frac{1}{n \sum_{k=1}^K w_k}
	\mathrm{\lim \rm inf}_{n \to \infty}
	\left[
	{\rm inf_{ \psi \in J ;  | \theta \sigma^2 - \theta_0 \sigma_0^2  | \leq \epsilon }}
	{p\ell}_{n} (\psi)
	-
	{p\ell}_n\{ (\theta,\sigma^2 - \epsilon) \}
	\right]
	>0.
	\end{equation}
	This implies that $\hat{\theta}_{{p\ell}} \hat{\sigma}^{2}_{{p\ell}}$ does not converge in probability to $\theta_0 \sigma_0^2$ in the case where $bc < \theta_0 \sigma_0^2$. Hence, the proof of the case (i) is concluded. The displays \eqref{eq:cons:i:one} and \eqref{eq:cons:i:two} also each imply the case (iii). 
	
	Let us now address the case (ii). Let $\tilde{\sigma}^2$ be defined as
	\[
	\tilde{\sigma}^2
	=
	\begin{cases}
	S ~ ~ & \mbox{if} ~ ~ S \in [c,d] \\
	c ~ ~ & \mbox{if} ~ ~ S \leq c \\
	d ~ ~ & \mbox{if} ~ ~ S \geq d. \\
	\end{cases}
	\]
	Let $\tilde{\theta} = (\theta_0 \sigma_0^2) / \tilde{\sigma}^2$ and observe that by assumption
	\[
	\tilde{\theta} 
	\in
	\left[  
	\frac{\theta_0 \sigma_0^2}{d},
	\frac{\theta_0 \sigma_0^2}{c}
	\right]
	\subset
	\left[  
	\frac{ad}{d},
	\frac{bc}{c}
	\right]
	= [a,b].
	\]
	Then, from \eqref{eq:reindexing:pl}, from the last display of the proof of Theorem 1 in \cite{Yin1991} and from similar arguments as before, we have for $\epsilon >0$, almost surely, with $\tilde{\psi} = (\tilde{\theta},\tilde{\sigma}^2) \in J$ with $\tilde{\theta}\tilde{\sigma}^2 = \theta_0 \sigma_0^2$, 
	\begin{flalign*}
	&\frac{1}{n \sum_{k=1}^K w_k}
	\mathrm{\lim \rm inf}_{n \to \infty}
	{{\rm inf}_{ \substack{ | \theta \sigma^2 - \tilde{\theta} \tilde{\sigma}^2| \geq \epsilon \\ (\theta,\sigma^2) \in J} }}
	\left\{
	{p\ell}_{n}( \psi )
	-
	{p\ell}_{n}( \tilde{\psi} )
	\right\}
\geq 
	{\rm inf}_{ \substack{ | \theta \sigma^2 - \tilde{\theta} \tilde{\sigma}^2| \geq \epsilon \\ (\theta,\sigma^2) \in J} }
	\left\{
	g(\theta\sigma^2)
	-g( \theta_0 \sigma_0^2 )
	+g_S( \sigma^2 )
	-g_S(\tilde{\sigma}^2)
	\right\}&
	\\
	&\quad \quad\quad\quad\quad\quad\quad\quad\quad\quad\quad\quad\quad\quad\quad\quad\quad\quad\quad\quad\quad \geq
	{\rm inf}_{ \substack{ | \theta \sigma^2 - \tilde{\theta} \tilde{\sigma}^2| \geq \epsilon \\ (\theta,\sigma^2) \in J} }
	\left\{
	g(\theta\sigma^2)
	-g( \theta_0 \sigma_0^2 )
	\right\} 
	 > 0.
	\end{flalign*}
	This concludes the proof for the case (ii).
	
	Consider now the case (iv) where $J = (0 , \infty) \times  [c,d]$. Let $\tilde{\sigma}$ and $\tilde{\theta}$ be defined as in the proof for the case (ii).
	From the offline equation after (2.23) in the proof of Theorem 1 in \cite{Yin1991}, we have, with the notation and arguments in \eqref{eq:reindexing:pl},
	
	\begin{equation} \label{eq:cons:infinite:supp:theta:one}
	\sum_{a=0}^{k-1}
	\sum_{j=0}^{ \lfloor \frac{n-1-a-k}{  k} \rfloor }
	\ell_{ x^{(k,a)}_{j+1} |x^{(k,a)}_{j} }(\psi)
	-
	\sum_{a=0}^{k-1}
	\sum_{j=0}^{ \lfloor \frac{n-1-a-k}{  k} \rfloor }
	\ell_{ x^{(k,a)}_{j+1} |x^{(k,a)}_{j} }(\tilde{\psi})
	\geq
	\frac{n}{\theta}
	\left( 
	\frac{\theta_0 \sigma_0^2}{2d}
	\right)	 
	+
	\frac{1}{\theta}
	r_{4,\psi,\tilde{\psi}},
	\end{equation}
	where ${\rm sup}_{ \theta \leq \rho, c \leq \sigma^2 \leq d } |r_{4,\psi,\tilde{\psi}}| = o(n) $ almost surely, where $\rho>0$ is a constant depending only on $\theta_0,\sigma_0^2,c,d$.
	Also, from (2.26) in the proof of Theorem 1 in \cite{Yin1991}, we have
	\begin{equation} \label{eq:cons:infinite:supp:theta:two}
	\sum_{a=0}^{k-1}
	\sum_{j=0}^{ \lfloor \frac{n-1-a-k}{  k} \rfloor }
	\ell_{ x^{(k,a)}_{j+1} |x^{(k,a)}_{j} }(\psi)
	-
	\sum_{a=0}^{k-1}
	\sum_{j=0}^{ \lfloor \frac{n-1-a-k}{  k} \rfloor }
	\ell_{ x^{(k,a)}_{j+1} |x^{(k,a)}_{j} }(\tilde{\psi})
	\geq
	\nu n
	+
	r_{5,\psi,\tilde{\psi}},
	\end{equation}
	where ${\rm sup}_{ \theta \geq B, c \leq \sigma^2 \leq d } |r_{5,\psi,\tilde{\psi}}| = o(n) $ almost surely, where $\nu >0$ and $B < \infty$ can be chosen as functions of $\theta_0, \sigma_0^2,c,d$ only. Finally, as shown for obtaining \eqref{eq:for:cons:asymptotic:pl},
	\begin{eqnarray} \label{eq:cons:infinite:supp:theta:three}
	\sum_{a=0}^{k-1}
	\sum_{j=0}^{ \lfloor \frac{n-1-a-k}{  k} \rfloor }
	\ell_{ x^{(k,a)}_{j+1} |x^{(k,a)}_{j} }(\psi)
	-
	\sum_{a=0}^{k-1}
	\sum_{j=0}^{ \lfloor \frac{n-1-a-k}{  k} \rfloor }
	\ell_{ x^{(k,a)}_{j+1} |x^{(k,a)}_{j} }(\tilde{\psi})	=
	n
	\left\{
	g(\theta \sigma^2) - g(\theta_0 \sigma_0^2)
	\right\}
	+
	r_{6,\psi,\tilde{\psi}},
	\end{eqnarray}
	where ${\rm sup}_{ \rho \leq \theta \leq B, c \leq \sigma^2 \leq d } |r_{6,\psi,\tilde{\psi}}| = o(n) $ almost surely. We have for $\epsilon >0$ ${\rm inf}_{|\theta \sigma^2 - \theta_0 \sigma_0^2|\geq \epsilon } g(\theta \sigma^2 ) - g( \theta_0 \sigma_0^2 ) >0$. Hence, there exists a constant $A_{\rm inf}>0$, not depending on $n$, so that, from \eqref{eq:cons:infinite:supp:theta:one}, \eqref{eq:cons:infinite:supp:theta:two}, \eqref{eq:cons:infinite:supp:theta:three} and \eqref{eq:reindexing:pl}
	{\normalsize
		\begin{align*}
		{\rm inf_{\substack{ \theta \in (0,\infty), \sigma^2 \in [c,d] 
				\\
				|\theta \sigma^2 - \tilde{\theta} \tilde{\sigma}^2| \geq \epsilon } }}
		{p\ell}_n(\psi)
		-
		{p\ell}_n(\tilde{\psi})
		\geq  &
		n
		\left(
		\sum_{k=1}^K
		w_k
		\right)
		\left\{
		A_{\rm inf}
		+g_S(\sigma^2)
		-g_S(\tilde{\sigma}^2)
		\right\}
		+
		r_{7,\psi,\tilde{\psi}},
		\geq  
		n
		\left(
		\sum_{k=1}^K
		w_k
		\right)
		A_{\rm inf}
		+
		r_{7,\psi,\tilde{\psi}},
		\end{align*}}
	where $\rm sup_{0 < \theta <  \infty, c \leq \sigma^2 \leq d } | r_{7,\psi,\tilde{\psi}}| = o(n) $ almost surely. This concludes the proof for the case (iv).
\end{proof}

\begin{proof}[{\bf Proof of Theorem \ref{thm:consistency:inconsistency:plc}}]
	
	Let $n_{k,a} = \lfloor (n-1-a-k)/  k \rfloor$, $R^{(k,a)}_{\psi} = [ \sigma^2 e^{- \theta | x^{(k,a)}_i - x^{(k,a)}_j  | }]_{0 \leq i,j \leq n_{k,a} +1}$ and $Z^{(k,a)} = (x^{(k,a)}_0,\ldots,x^{(k,a)}_{n_{k,a}+1})^\top$.
	We have for $\psi \in J$, from Lemma \ref{lemma:for:PL:CL:to:lik}, by writing Gaussian likelihoods as products of conditional likelihoods, forward and backward, and by using the Markovian property of $Z$ (Lemma 1 in \cite{Yin1991}),
	{\normalsize
		\begin{align*}
		{pc\ell}_n (\psi)
		= &
		\sum_{k=1}^{K}
		w_{k}
		\sum_{a=0}^{k-1}
		\sum_{j=0}^{ \lfloor \frac{n-1-a-k}{  k} \rfloor }
		\left\{
		\ell_{ x^{(k,a)}_{j+1} |x^{(k,a)}_{j} }(\psi)
		+ 
		\ell_{ x^{(k,a)}_{j} |x^{(k,a)}_{j+1} }(\psi)	 
		\right\}
		\\  
		= &	\sum_{k=1}^{K}
		w_{k}
		\sum_{a=0}^{k-1}
		\left(
		\ln(| R^{(k,a)}_{\psi} |) 
		+ (Z^{(k,a)})^\top R_{\psi}^{-1} Z^{(k,a)} 
		-	 \ln(\sigma^2) - \frac{1}{\sigma^2} Z(x^{(k,a)}_0)^2 
		\right.
		\\
		& 
		\left.
		+
		\ln(| R^{(k,a)}_{\psi} |) 
		+ (Z^{(k,a)})^\top R_{\psi}^{-1} Z^{(k,a)} 
		-	 \ln(\sigma^2) - \frac{1}{\sigma^2} Z(x^{(k,a)}_{n_{k,a} + 1})^2  \right)
		\\
		= &
		\sum_{k=1}^{K}
		w_{k}
		\sum_{a=0}^{k-1}
		\left[
		\left\{
		2 \sum_{j=0}^{ \lfloor \frac{n-1-a-k}{  k} \rfloor }
		\ell_{ x^{(k,a)}_{j+1} |x^{(k,a)}_{j} }(\psi) 
		\right\}
		+	 \ln(\sigma^2) + \frac{1}{\sigma^2} Z(x^{(k,a)}_{0})^2
		-	 \ln(\sigma^2) - \frac{1}{\sigma^2} Z(x^{(k,a)}_{n_{k,a} + 1})^2
		\right]
		\\
		= &
		2 \sum_{k=1}^{K}
		w_{k}
		\sum_{a=0}^{k-1}
		\sum_{j=0}^{ \lfloor \frac{n-1-a-k}{  k} \rfloor }
		\ell_{ x^{(k,a)}_{j+1} |x^{(k,a)}_{j} }(\psi) 
		+	 \sum_{k=1}^{K}
		w_{k}
		\sum_{a=0}^{k-1}
		\frac{1}{\sigma^2}
		\left(
		Z( s_{1+a} )^2
		-
		Z(s_{1+a+k (n_{k,a}+1)})^2
		\right).
		\end{align*}}
	Hence, we conclude the proof in the case $J = [a,b] \times [ c , d ] $ or $J = (0 , \infty) \times  [c,d]$ from \eqref{eq:cons:infinite:supp:theta:one}, \eqref{eq:cons:infinite:supp:theta:two} and \eqref{eq:cons:infinite:supp:theta:three}. In the case $J = [a,b] \times (0 , \infty)$, we can express $\hat{\sigma}^2(\theta) = \mathrm{argmin}_{\sigma^2 \in (0,\infty)} {pc\ell}_n(\theta,\sigma^2)$ explicitly, and conclude using similar techniques as in \cite{Yin1991} and similar arguments as in \eqref{eq:reindexing:pl}. We skip the details. 
\end{proof}

\begin{proof}[{\bf Proof of Lemma \ref{lemma:approx:tauKn}}]
	We have that for $1 \leq i<j\leq n$ and $1 \leq k<\ell \leq n$, $$\mathrm{\rm cov}(W_{i,j}^2-1,W_{k,\ell}^2-1 ) = 2 \mathrm{\rm  cov}(W_{i,j},W_{k,\ell})^2.$$ One can show, by computing explicitly the covariances $\mathrm{\rm  cov}(W_{i,j},W_{k,\ell})$, by distinguishing the different cases in \eqref{eq:bijkl}, and by considering Taylor expansions, that we have for $1 \leq i < j \leq {\rm min}(n,i + K)$ and $1 \leq k < \ell \leq {\rm min}(n,k + K)$,
	\[
	\mathrm{\rm cov}(W_{i,j},W_{k,\ell})^2
	=
	b_{i,j,k,\ell} + r_{i,j,k,\ell} ( s_{{\rm max}(i,j,k,\ell)} - s_{{\rm min}(i,j,k,\ell)} ),
	\]
	where 
	\[
	\rm sup_{ \substack{
			n \in \mathbb{N} 
			\\
			i,j,k,\ell \in\{ 1,\ldots,n\}
			\\
			i +1 \leq j \leq i + K
			\\
			k +1 \leq l \leq k + K
			\\
			|k-i| \leq K
	}}
	|r_{i,j,k,\ell}| = O(1)
	~ ~ ~
	\mbox{and}
	~ ~ ~
	r_{i,j,k,\ell} = 0
	~ ~ 
	\mbox{if}
	~ ~ 
	|k-i| > K.
	\]
	In the multiple sum
	\[
	\sum_{i=1}^{n-1} \sum_{j=i+1}^{{\rm min}(n,i+K)}
	\sum_{k={\rm max}(1,i-K)}^{{\rm min}(n,i+K)} \sum_{\ell=k+1}^{{\rm min}(n,k+K)}
	( s_{{\rm max}(i,j,k,\ell)} - s_{{\rm min}(i,j,k,\ell)} )
	\]
	one can see that for any $a \in \{2,\ldots,n\}$, $ \Delta_a = s_a - s_{a-1}$ appears less than $4K^4$ times. Hence, this sum is a $O \left( \sum_{a=2}^n \Delta_a  \right)$. This concludes the proof.
\end{proof}

\begin{proof}[{\bf Proof of Lemma \ref{lem:larger:variance}}]
	We have, for $n \geq K+1$,
	\[
	\tau_{n}^2
	=
	\left\{
	\frac{2}{n} \sum_{k=1}^K  \sum_{\ell=1}^K
	w_k w_\ell
	\left( \sum_{i=1}^{n-k} \sum_{j=1}^{n-\ell} b_{i,i+k,j,j+\ell}  \right)
	\right\}
	+o(1).
	\]
	Let $k,\ell \in \{1,\ldots,K\}$ be fixed. Without loss of generality, assume that $\ell \leq k$. We have, for $i\in \{ 1,\ldots,n-k\}$
	\begin{align*}
	\sum_{j=1}^{n-\ell} b_{i,i+k,j,j+\ell}
	\geq &
	\sum_{j=i}^{i+k-\ell} b_{i,i+k,j,j+\ell}
	= 
	\sum_{j=i}^{i+k-\ell} \frac{s_{j+\ell} - s_j}{s_{i+k} - s_i} 
	\geq 
	\frac{s_{i+\ell} - s_i}{s_{i+k} - s_i}
	+
	\sum_{j=i+1}^{i+k-\ell} \frac{s_{j+\ell} - s_{j+\ell-1}}{s_{i+k} - s_i} 
	=   
	\frac{s_{i+k} - s_i}{s_{i+k} - s_i}
	=   
	1.
	\end{align*}
	Hence $\sum_{i=1}^{n-k} \sum_{j=1}^{n-\ell} b_{i,i+k,j,j+\ell} \geq n-k$.
	This concludes the proof.
\end{proof}

\begin{proof}[{\bf Proof of Lemma \ref{lem:smaller:variance}}]
	The lemma follows from Lemma \ref{lemma:approx:tauKn}, because, with the notation there, each $b_{i,j,k,\ell}$ is in $[0,1]$ and, for any fixed $i$, there are less than $2 K^3$ values of $(j,k,\ell)$ in \eqref{eq:tauKn:equivalent} for which $b_{i,j,k,\ell}$ is non-zero. 
\end{proof}

\begin{proof}[{\bf Proof of Theorem \ref{thm:normality}}]
	Because of the consistency results in Theorems \ref{thm:consistency:inconsistency:pl} and \ref{thm:consistency:inconsistency:plc}, it is sufficient to consider the case where $J = [a,b] \times [c,d]$. Furthermore, we only write the proof of \eqref{dotto}, since the proof of \eqref{dotto:pcl} is similar.

	Let 
	\begin{equation}\nonumber
	\omega(\psi)=\frac{\partial}{\partial \theta} {p\ell}_{n}(\psi).
	\end{equation}
	From (\ref{eq:pairwise:lik:to:lik}), we can write
	
	\begin{eqnarray}\nonumber\label{der1}
	\omega(\psi)&=& \sum_{k=1}^{K}
	w_k
	\sum_{a=0}^{k-1}\sum_{j=0}^{\lfloor \frac{n-a-1-k}{k}\rfloor}\frac{\partial}{\partial \theta}
	\ell_{s_{jk+a+1},s_{(j+1)k+a+1}}(\psi).  
	\end{eqnarray}
	From (3.11) in \cite{Yin1991} , we can write, with $\rm sup_{\psi \in [a,b] \times [c,d]} |r_{i,\psi}| = O_p(1)$ for $i \in \{1,2,3\}$,
	{\normalsize
		\begin{eqnarray}\label{der2}
		\omega(\psi)
		&=&
		\left\{
		- \sum_{k=1}^{K} 
		w_k
		\sum_{a=0}^{k-1}\frac{\sigma_{0}^{2}\theta_{0}}{\sigma^{2}\theta^2}
		\sum_{j=0}^{\lfloor \frac{n-a-1-k}{k}\rfloor}
		\left(
		W^{2}_{jk+a+1,(j+1)k+a+1} + \left(\lfloor \frac{n-a-1-k}{k}\rfloor  \right) 
		\frac{1}{\theta}
		\right)
		\right\}
		 + r_{1,\psi}  \\
		&=&
		\left\{
		\sum_{k=1}^{K}
		w_k
		\sum_{a=0}^{k-1}
		\sum_{j=0}^{\lfloor \frac{n-a-1-k}{k}\rfloor} 
		\left(
		\frac{-\sigma_{0}^{2}\theta_{0}}{\sigma^2 \theta^2}
		W^{2}_{jk+a+1,(j+1)k+a+1} + \frac{1}{\theta}
		\right)
		\right\}
		+
		r_{2,\psi}
		\nonumber =
		\left\{
		\sum_{i=1}^{n}
		\sum_{k=1}^{\rm min\{K,n-i\}}
		w_k
		\left(
		- \frac{\sigma_{0}^{2}\theta_{0}}{\sigma^2 \theta^2 }
		W^{2}_{i,i+k} + \frac{1}{\theta}\right)   
		\right\}
		+  r_{3,\psi}.
		\end{eqnarray}}
	Now $\hat{\sigma}^{2}\in [a,b]$ and by Theorem \ref{thm:consistency:inconsistency:pl}, $
	\hat{\sigma}_{{p\ell}}^{2}\hat{\theta}_{{p\ell}} \stackrel{a.s}{\longrightarrow}
	\theta_{0}\sigma_{0}^{2}$  
	and in view of \eqref{der2}, we obtain
	
	\begin{eqnarray*}\nonumber\label{der3}
	O_p(1)
	&=& 
	\sum_{i=1}^{n}
	\sum_{k=1}^{{\rm min}\{K,n-i\}}
	w_k
	\left( -\sigma_{0}^{2}\theta_{0}W^{2}_{i,i+k} + \hat{\sigma}_{{p\ell}}^{2}\hat{\theta}_{{p\ell}}\right)   \\
	\end{eqnarray*}
	
	and
	
	\begin{flalign*}
	O_p(1)
	&=
	\sum_{i=1}^{n}\sum_{k=1}^{{\rm min}\{K,n-i\}} 
	w_k
	\left\{
	-\sigma_{0}^{2}\theta_{0}\left(W^{2}_{i,i+k}-1\right)
	\right\}
	+
	\sum_{i=1}^{n}\sum_{k=1}^{{\rm min}\{K,n-i\}}
	w_k
	(\hat{\sigma}_{{p\ell}}^{2}\hat{\theta}_{{p\ell}}-\sigma^{2}_{0}\theta_{0}).
	\end{flalign*}
	
	Then
	\begin{flalign*}
		\sqrt{n}
	\left( \sum_{k=1}^K w_k \right)
	\left\{ 1+o_p(1) \right\} \left(\hat{\sigma}_{{p\ell}}^{2}\hat{\theta}_{{p\ell}}-\sigma_{0}^{2}\theta_{0}\right) 
	= 
	\left\{
	\frac{\sigma_{0}^{2}\theta_{0}}{\sqrt{n}}
	\sum_{i=1}^{n}\sum_{k=1}^{\rm min\{K,n-i\}}  
	w_k \left(W^{2}_{i,i+k}-1\right)
	\right\}
	+o_p(1).
	&
	\end{flalign*}
	For $i\in\{1,\ldots,n\}$ and $k\in \{1,\ldots,{\rm min}(K,n-i)\}$, let
	$(W^{2}_{i,i+k}-1)= T_{(i-1)K+k}$. Then we can write
	
	\begin{eqnarray} \label{eq:for:TCL}
	\sqrt{n}
	\left( \sum_{k=1}^K w_k \right)
	\left(\hat{\sigma}_{{p\ell}}^{2}\hat{\theta}_{{p\ell}}-\sigma_{0}^{2}\theta_{0}\right) 
	&=& 
	\frac{\sigma_{0}^{2}\theta_{0}}{\sqrt{n}}
	\left(
	\sum_{i=1}^{n}\sum_{k=1}^{{\rm min}\{K,n-i\}} 
	w_k
	T_{(i-1)K+k} 
	\right)  +o_p(1).
	\end{eqnarray}
	Since $T_{i}$ is independent of $T_{j}$ for $|i-j|\geq K(K+1)$ we can apply Theorem 2.1 in \cite{Neumann2013} for weakly dependent variables and we can establish a central limit theorem for
	$ \{ (\sqrt{n} \sum_{k=1}^K w_k ) / (\tau_{n}\sigma^{2}_{0}\theta_{0} )\} \left(\hat{\sigma}_{{p\ell}}^{2}\hat{\theta}_{{p\ell}}-\sigma_{0}^{2}\theta_{0}\right)$.\\
	
	Let us assume
	\begin{flalign*}
	\frac{\sqrt{n} (\sum_{k=1}^K w_k)}{\tau_{n}\sigma^{2}_{0}\theta_{0}} \left(\hat{\sigma}_{{p\ell}}^{2}\hat{\theta}_{{p\ell}}-\sigma_{0}^{2}\theta_{0}\right)
	&\stackrel{\mathcal{D}}{\nrightarrow}
	\mathcal{N}
	(0, 1).
	\end{flalign*}
	
	By Lemmas \ref{lem:larger:variance} and \ref{lem:smaller:variance}, we can extract a subsequence $n' \to \infty $ so that $\tau_{K,n'}^{2} \to \tau_{K}^{2} \in (0 , \infty)$ as $n' \to \infty$ and so that 
	\begin{flalign} \label{eq:TCL:to:be:contradicted}
	\frac{\sqrt{n} (\sum_{k=1}^K w_k) }{\tau_{K,n'}\sigma^{2}_{0}\theta_{0}} \left(\hat{\sigma}_{{p\ell}}^{2}\hat{\theta}_{{p\ell}}-\sigma_{0}^{2}\theta_{0}\right)
	&\stackrel{\mathcal{D}}{\nrightarrow}
	\mathcal{N}
	(0, 1).
	\end{flalign}
	
	The triangular array $\left(T_i 
	\right)$ satisfies the conditions of  Theorem 2.1 in \cite{Neumann2013}, thus we obtain
	
	\begin{eqnarray*}
		\frac{1}{\sqrt{n'}}\sum_{i=1}^{n'}
		\sum_{k=1}^{\rm min\{K,n'-i\}}
		w_k  T_{(i-1)K+k}
		\stackrel{\mathcal{D}}{\longrightarrow}
		\mathcal{N}
		(0, \tau_{K}^2).  
	\end{eqnarray*}
	
	Hence from \eqref{eq:for:TCL} and Slutsky's lemma we are in contradiction with \eqref{eq:TCL:to:be:contradicted}.
	This concludes the proof.
\end{proof}

\begin{proof}[{\bf Proof of Lemma \ref{lem:strictly:larger:variance}}]
	From the proof of Lemma \ref{lem:larger:variance}, it suffices to show that for any $2 \leq k \leq K$,
	\[
	\mathrm{\lim \rm inf}_{n \to \infty}
	\frac{1}{n}
	\sum_{i=1}^{n-k} \sum_{j=1}^{n-k} b_{i,i+k,j,j+k}  >1.
	\]
	We have from \eqref{eq:bijkl}, since $s_i = i/n$ for all $n \in \mathbb{N}$ and $i\in \{1,\ldots,n-k\}$ 
	\[
	\sum_{j=1}^{n-k} b_{i,i+k,j,j+k}
	\geq
	1
	+
	b_{i,i+k,i+1,i+k+1}
	=
	1+
	\frac{ (k-1)^2 }{ k^2 }.
	\]
	Hence 
	\[
	\frac{1}{n}
	\sum_{i=1}^{n-k} \sum_{j=1}^{n-k} b_{i,i+k,j,j+k}
	\geq 
	1+
	\frac{ (k-1)^2 }{ k^2 } + o(1).
	\]
\end{proof}

\section{An inconsistency result for variance estimation by weighted pairwise log-likelihood, for general correlation functions}
\label{appendix:inconsistency}

In this section, we extend the inconsistency results (i) and (iii)
of Theorem \ref{thm:consistency:inconsistency:pl} for the WPMLE,
to more general covariance functions than the exponential ones as in \eqref{exp:cov}. Furthermore, the input space of the Gaussian process under consideration is $[0,1]^d$, where the dimension $d$ is allowed to be larger than one.
We consider the estimation of a variance parameter, where the correlation function is known. 

We consider a stationary correlation function $C : \mathbb{R}^d \to \mathbb{R}$ on $\mathbb{R}^d$, with $d \in \mathbb{N}$. We make the following assumptions on $C$.

\begin{assumption} \label{condition:C}
\normalfont
The function $C$ is continuous. 
 Let $d_{C} (\bx,\bx') = \sqrt{2 C(\bzero) - 2C(\bx  -  \bx')}$.
	Let $N([0,1]^d,d_C,\rho)$ be the minimum number of balls with radius $\rho$ (w.r.t. the distance $d_C$), required to cover $[0,1]^d$.
We have
	\begin{equation}
		\int_{0}^{\infty} \sqrt{\ln\{N([0,1]^d, d_{C},\rho)\}} \ d\rho< \infty.
		\label{eq:intBall0}
	\end{equation}
	Furthermore, the Fourier transform $\widehat{C}$ of $C$ satisfies
	\begin{equation} 
		\exists \ P < \infty \quad \mbox{so that as} \quad \|\bw\| \to \infty, \quad \widehat{C}(\bw) \|\bw\|^P \to \infty.
		\label{eq:Fourier0}
	\end{equation}
	Finally, $C(\bzero) = 1$ and $C(\bx)<1$ for $\bx \neq \bzero$.
\end{assumption}

Many stationary correlation functions satisfy the above assumption. The condition \eqref{eq:intBall0} holds when $C$ is H\"older continuous and the condition \eqref{eq:Fourier0} holds when $C$ belongs to the Mat\'ern class of covariance functions \cite{Ste1999}. We refer to the discussion after Condition A.1 in \cite{lopera17finite} for more insight on the above assumption.

We now consider a zero-mean Gaussian process $Y$ on $[0,1]^d$ with covariance function $\sigma_0^2 C$ with an unknown fixed $\sigma_0^2 \in (0,\infty) $. We assume that $C$ is known. Consider a triangular array of weights $\{ w^{(n)}_{i,j} \}_{n \in \mathbb{N}, 1 \leq i  < j \leq n}$ in $[0,\infty)$ and let $w^{(n)}_{i,j} = w_{i,j}$. Assume that for all $n \in \mathbb{N}$, not all $w_{i,j}$ are zero.
Consider also a sequence $(\bx_i)_{i \in \mathbb{N}}$ of observation points on $[0,1]^d$. Then, the pairwise log likelihood is 

\begin{equation*}
	\ell_{C, \bx_i,\bx_j }(\sigma^2)
	= 2 \ln (\sigma^{2})+ \ln (1-C(\bx_i-\bx_j)^2)+
	\frac{1}{\sigma^2} Y(\bx_i)^2
	+
	\frac{ \{Y(\bx_j)- C(\bx_i-\bx_j) Y(\bx_i)\}^2 }{\sigma^{2}(1-C(\bx_i-\bx_j)^2)}
\end{equation*}
and the weighted pairwise log likelihood criterion is

\begin{equation*}
		p\ell_{C,n}(\sigma^2) =  
		\sum_{i=1}^{n-1} \sum_{j=i+1}^n
		w_{i,j} \ell_{ C,\bx_i,\bx_j }(\sigma^2).
	\end{equation*}
We can then define the WPMLE $\hat{\sigma}_{C,p\ell}^2$ of $\sigma_0^2$ as the solution of
	\begin{equation}\label{eq:plcn} \nonumber
		p\ell_{C,n}(\hat{\sigma}_{C,p\ell}^2)=\underset{\sigma^2 \in (0 , \infty)}{\rm min}\,\,p\ell_{C,n} (\sigma^2).
	\end{equation}
	
	The next theorem gives the general inconsistency result for the WPMLE. It is proved at the end of \ref{appendix:inconsistency}.

\begin{theorem} \label{thm:inconsistency:theta:fixed}
Assume that the above assumption holds.
	Then $\hat{\sigma}_{C,p\ell}^2$ does not converge to $\sigma_0^2$ in probability as $n \to \infty$. 
\end{theorem}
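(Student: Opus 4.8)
The plan is to exploit the fact that here a single variance parameter is being estimated, so that the WPMLE $\hat{\sigma}_{C,p\ell}^2$ has a closed form; I would then show that this closed form stays bounded away from $\sigma_0^2$ on an event whose probability is bounded below uniformly in $n$, by means of a uniform band result for the sample paths of $Y$ (the analogue, for a general correlation function on $[0,1]^d$, of Lemma~\ref{lem:constr:GP}).

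First I would rewrite the criterion. Letting $c_{i,j} = C(\bx_i - \bx_j)$, which satisfies $c_{i,j}^2 < 1$ for the distinct pairs carrying positive weight by the last part of the assumption, one has
\begin{equation*}
p\ell_{C,n}(\sigma^2) = W_n \ln(\sigma^2) + B_n + \frac{D_n}{\sigma^2},
\end{equation*}
where $W_n = 2 \sum_{i<j} w_{i,j} > 0$, $B_n = \sum_{i<j} w_{i,j} \ln(1 - c_{i,j}^2)$ is deterministic, and
\begin{equation*}
D_n = \sum_{i<j} w_{i,j} \left[ Y(\bx_i)^2 + \frac{\{ Y(\bx_j) - c_{i,j} Y(\bx_i) \}^2}{1 - c_{i,j}^2} \right] > 0 \quad \text{a.s.}
\end{equation*}
Since $p\ell_{C,n}(\sigma^2) \to + \infty$ as $\sigma^2 \to 0^+$ and as $\sigma^2 \to + \infty$, and since $\partial_{\sigma^2} p\ell_{C,n}(\sigma^2) = W_n/\sigma^2 - D_n/\sigma^4$ vanishes only at $\sigma^2 = D_n/W_n$, this yields the explicit expression $\hat{\sigma}_{C,p\ell}^2 = D_n/W_n$. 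I note in passing that the (nonnegative) weights enter only through the positive normalising constant $W_n$, so the weight configuration plays no role in the argument.

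The key probabilistic input comes next. Under the assumption, the entropy condition \eqref{eq:intBall0} ensures that $Y$ has a.s. continuous sample paths on $[0,1]^d$, and, arguing exactly as for Lemma~\ref{lem:constr:GP} (this is Lemma~A.3 in \cite{lopera17finite}, where the spectral growth condition \eqref{eq:Fourier0} is what guarantees that the support of the law of $Y$ is large enough), for any fixed real number $M$ one has
\begin{equation*}
p := \Pr\left( \, \forall \bx \in [0,1]^d, \ Y(\bx) \geq M \, \right) > 0.
\end{equation*}
Crucially, $p$ depends only on the law of $Y$ and not on $n$, on the weights, or on the observation points. On this event $Y(\bx_i)^2 \geq M^2$ at every observation point, and the second summand of $D_n$ is nonnegative, so $D_n \geq M^2 \sum_{i<j} w_{i,j}$ and hence $\hat{\sigma}_{C,p\ell}^2 = D_n/W_n \geq M^2/2$. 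Taking $M = 2\sigma_0$ gives $\hat{\sigma}_{C,p\ell}^2 \geq 2\sigma_0^2$ on this event, whence $\Pr( |\hat{\sigma}_{C,p\ell}^2 - \sigma_0^2| \geq \sigma_0^2 ) \geq p > 0$ for every $n$, which is incompatible with convergence in probability of $\hat{\sigma}_{C,p\ell}^2$ to $\sigma_0^2$.

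The hard part is the positivity of $\Pr(\forall \bx, Y(\bx) \geq M)$: this is precisely where the assumption, and in particular the Fourier condition \eqref{eq:Fourier0}, is used, and it is the generalisation to arbitrary correlation functions on $[0,1]^d$ of the band lemma employed for the Ornstein--Uhlenbeck process; I would borrow it from \cite{lopera17finite}. Everything else is a routine computation. It is worth recording the underlying intuition: $\hat{\sigma}_{C,p\ell}^2 = D_n/W_n$ is an \emph{exactly unbiased} estimator of $\sigma_0^2$, but since $Y$ has continuous paths the weighted averages $D_n/W_n$ fail to concentrate as $n \to \infty$, so no law of large numbers is available.
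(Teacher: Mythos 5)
Your proposal is correct and follows essentially the same route as the paper: derive the closed form $\hat{\sigma}_{C,p\ell}^2 = D_n/W_n$ and show it stays above $2\sigma_0^2$ on the event $\{\forall \bx \in [0,1]^d,\ Y(\bx) \geq 2\sigma_0\}$, whose probability is positive and independent of $n$ by Lemma A.3 of \cite{lopera17finite}, contradicting convergence in probability. The only (harmless) difference is that you lower-bound $D_n$ directly through the $Y(\bx_i)^2$ terms, whereas the paper symmetrizes the quadratic form and uses the uniform bound $\Delta = \inf_{\bt} \{2-2C(\bt)\}/\{2(1-C(\bt)^2)\} > 0$ on the cross-term coefficients together with the event $Y \geq 0$; your variant is marginally simpler and equally valid.
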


Because of its generality, Theorem \ref{thm:inconsistency:theta:fixed} reinforces the warning given in (i) and (iii) of Theorem \ref{thm:consistency:inconsistency:pl} against the WPMLE under fixed-domain asymptotics. Indeed, the estimation of a single variance parameter $\sigma_0^2$, with a known correlation function $C$, is arguably one of the most favorable one for covariance parameter estimation. For instance it is well-known that the MLE of $\sigma_0^2$ is $n^{1/2}$-consistent in the setting of Theorem \ref{thm:inconsistency:theta:fixed} (provided the matrix $[C(\bx_i-\bx_j)]_{i,j \in \{1 , \ldots,n \}}$ is invertible for all $n$). Thus, it would not be a surprise if the WPMLE was shown to be inconsistent under fixed-domain asymptotics, in more general cases than in Theorem \ref{thm:inconsistency:theta:fixed}, where several covariance parameters have to be estimated, for instance a variance and $d$ spatial correlation lengths \cite{roustant2012dicekriging}.

In the next theorem, we show that, for some specific structures of the weights and of the observation points, the WPMLE of $\sigma_0^2$  converges to a non-constant random variable (that is given explicitly in the proof of this theorem).

\begin{theorem} \label{thm:inconsistency:theta:fixed:2}
Assume that the above assumption holds.
	Let $g: [-1,1]^d \to \mathbb{R}^+ $ be a continuous non-zero function satisfying $g(\bx) = g(-\bx)$ for $\bx \in [0,1]^d$. Assume that $w_{i,j} = g(\bx_j - \bx_i)$ for $n \in \mathbb{N}$, $i,j \in \{1,\ldots,n \}$.
	Assume also that there exists $\delta  \in (0,1]$ and $a \in (0,\infty)$ so that $1-C(\bt)= a ||\bt||^\delta + o( ||\bt||^\delta)$ as $\bt \to \bzero$. 
	Let the observation points $(\bx_i)_{i  \in  \mathbb{N}}$ be random, i.i.d. with bounded probability distribution function $f$ on $[0,1]^d$, with $(\bx_i)_{i  \in  \mathbb{N}}$ independent of $Y$. 
	Then there exists a non-negative random variable $\hat{\sigma}_{\infty}^2$ with non-zero variance so that $\mathrm{E}( \hat{\sigma}_{\infty}^2 ) = \sigma_0^2$ and so that, almost-surely, $\hat{\sigma}_{C,p\ell}^2 \to_{n \to \infty} \hat{\sigma}_{\infty}^2$.
\end{theorem}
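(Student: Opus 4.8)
The plan is to first reduce the estimator to an explicit ratio. Writing $\rho_{ij}=C(\bx_i-\bx_j)$ and using that $\bx_i\neq\bx_j$ almost surely, the criterion $p\ell_{C,n}(\sigma^2)$ equals $A_n\ln(\sigma^2)+B_n/\sigma^2+D_n$ with $A_n=2\sum_{i<j}w_{i,j}>0$, $D_n$ non-random, and
\[
B_n=\sum_{i<j}w_{i,j}\,T_{i,j},\qquad T_{i,j}:=Y(\bx_i)^2+\frac{\{Y(\bx_j)-\rho_{ij}Y(\bx_i)\}^2}{1-\rho_{ij}^2}=\frac{Y(\bx_i)^2+Y(\bx_j)^2-2\rho_{ij}Y(\bx_i)Y(\bx_j)}{1-\rho_{ij}^2},
\]
so that $\hat{\sigma}_{C,p\ell}^2=B_n/A_n$. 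Two elementary facts are to be recorded: $T_{i,j}\geq0$ (the $2\times2$ matrix with diagonal entries $1$ and off-diagonal entries $-\rho_{ij}$ is positive semidefinite), and, since $C(\bzero)=1$, $\mathrm{E}[T_{i,j}\mid\bx_i,\bx_j]=2\sigma_0^2$ whenever $\bx_i\neq\bx_j$. Consequently $\mathrm{E}[\hat{\sigma}_{C,p\ell}^2]=\sigma_0^2$ for every $n$, and the kernel $(\bx,\bx')\mapsto g(\bx'-\bx)T(\bx,\bx';Y)$, though singular on the diagonal, is integrable since $\mathrm{E}[g(\bx_1-\bx_2)T_{1,2}]=2\sigma_0^2\,\mathrm{E}[g(\bx_1-\bx_2)]<\infty$. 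I would also record that $Y$ admits a continuous modification by \eqref{eq:intBall0}, and, using $1-C(\bt)= a\|\bt\|^\delta+o(\|\bt\|^\delta)$ together with Kolmogorov's criterion, a locally $\gamma$-Hölder modification for every $\gamma<\delta/2$; this regularity is needed later to keep several singular integrals finite.

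Next I would pass to the limit. Conditionally on $Y$ the points $\bx_1,\bx_2,\dots$ are i.i.d., so $B_n/\binom n2$ and $A_n/\binom n2$ are $U$-statistics with symmetric kernels; by the display above and Fubini the kernel of $B_n$ is integrable for almost every realisation of $Y$, and that of $A_n$ is bounded. The strong law for $U$-statistics, applied conditionally on almost every $Y$ and then recombined via Fubini, yields that almost surely
\[
\frac{B_n}{\binom n2}\to N_\infty(Y):=\int\!\!\int g(\bx'-\bx)\,T(\bx,\bx';Y)\,f(\bx)f(\bx')\,d\bx\,d\bx',\quad\frac{A_n}{\binom n2}\to 2\bar g:=2\int\!\!\int g(\bx'-\bx)f(\bx)f(\bx')\,d\bx\,d\bx',
\]
with $\bar g\in(0,\infty)$ (positivity because the weights are assumed not identically zero). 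Hence $\hat{\sigma}_{C,p\ell}^2\to\hat{\sigma}_\infty^2:=N_\infty(Y)/(2\bar g)$ almost surely, $\hat{\sigma}_\infty^2\geq0$, and $\mathrm{E}[\hat{\sigma}_\infty^2]=\sigma_0^2$ again by $\mathrm{E}[T(\bx,\bx';Y)]=2\sigma_0^2$ and Fubini. It then remains to prove $\mathrm{Var}(\hat{\sigma}_\infty^2)>0$.

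For this I would argue by contradiction: suppose $N_\infty(Y)$ is almost surely constant. Since $N_\infty$ is a homogeneous quadratic functional of $Y$, for two independent copies $Y^{(1)},Y^{(2)}$ and $t\in(0,1)$ one has, with $B$ the associated symmetric bilinear functional, $N_\infty(\sqrt t\,Y^{(1)}+\sqrt{1-t}\,Y^{(2)})=t\,N_\infty(Y^{(1)})+(1-t)\,N_\infty(Y^{(2)})+2\sqrt{t(1-t)}\,B(Y^{(1)},Y^{(2)})$; as $\sqrt t\,Y^{(1)}+\sqrt{1-t}\,Y^{(2)}\stackrel{d}{=}Y$, constancy of $N_\infty$ forces $B(Y^{(1)},Y^{(2)})=0$ almost surely. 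Rearranging the double integral defining $B$ — this is exactly where the local Hölder regularity of $Y$ and $1-C(\bt)= a\|\bt\|^\delta+o(\|\bt\|^\delta)$ (with $\delta\leq1\leq d$) are used, to ensure convergence of the singular integrals — gives $B(Y^{(1)},Y^{(2)})=\int_{[0,1]^d}\phi_{Y^{(1)}}(\bx)\,Y^{(2)}(\bx)\,d\bx$ for an explicit, $Y^{(1)}$-measurable, bounded, compactly supported function $\phi_{Y^{(1)}}$. Its vanishing for almost every $Y^{(2)}$ forces $\int\!\int\phi_{Y^{(1)}}(\bx)\phi_{Y^{(1)}}(\bx')C(\bx-\bx')\,d\bx\,d\bx'=0$; by Parseval, the compact support of $\phi_{Y^{(1)}}$ (so $\widehat{\phi}_{Y^{(1)}}$ is entire) and $\widehat C(\bw)>0$ for $\|\bw\|$ large from \eqref{eq:Fourier0}, this forces $\phi_{Y^{(1)}}\equiv0$ for almost every $Y^{(1)}$. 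But a direct covariance computation gives that $\mathrm{Cov}\{\phi_Y(\bx),Y(\bx)\}$ is a positive multiple of $\sigma_0^2 f(\bx)\bar g(\bx)$, with $\bar g(\bx)=\int g(\bx'-\bx)f(\bx')\,d\bx'$, which is strictly positive on a set of $\bx$ of positive Lebesgue measure; hence $\mathrm{E}\int_{[0,1]^d}\phi_Y(\bx)^2\,d\bx>0$, so $\phi_Y$ is not a.s. the zero function — a contradiction. Therefore $\hat{\sigma}_\infty^2$ has non-zero variance.

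The hard part will be this last step: a non-negative quadratic functional of a Gaussian process need not be deterministic, but establishing genuine randomness cannot be done by merely checking that the integrand of $\mathrm{Var}(N_\infty)$ is positive near the diagonal, since that integrand may change sign. The argument must instead be routed through the bilinear form $B$, a "fullness of support" statement for $Y$ (the Fourier condition \eqref{eq:Fourier0}) turning $B(Y^{(1)},\cdot)\equiv0$ into $\phi_{Y^{(1)}}\equiv0$, and the one clean positivity that survives, $\mathrm{Cov}\{\phi_Y(\bx),Y(\bx)\}>0$. The secondary technical point, and the only place where $1-C(\bt)= a\|\bt\|^\delta+o(\|\bt\|^\delta)$ is really used, is controlling the diagonal singularities of the various kernels via the $\gamma$-Hölder continuity of $Y$ for $\gamma<\delta/2$, which keeps $\phi_Y$ and the rearranged form of $B$ finite even in the borderline case $\delta=d=1$.
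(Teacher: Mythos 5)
Your proposal is correct, and its first half coincides with the paper's argument: the paper likewise starts from the explicit ratio \eqref{eq:explicit:hat:sigma:square}, controls the diagonal singularity pathwise via the H\"older continuity of $Y$ (bounding the kernel by $B\|\bx-\by\|^{\alpha-\delta}$ with $\alpha<\delta$) together with $1-C(\bt)=a\|\bt\|^\delta+o(\|\bt\|^\delta)$, applies the strong law for $U$-statistics (Theorem A, Section 5.4 of Serfling) conditionally on almost every realization of $Y$, and obtains $\mathrm{E}(\hat{\sigma}_\infty^2)=\sigma_0^2$ by Fubini. Where you genuinely diverge is the non-degeneracy step. The paper settles it in one line by recycling Theorem \ref{thm:inconsistency:theta:fixed}: its proof gives, with probability bounded below uniformly in $n$ (an event depending on $Y$ only, hence independent of the random design), $\hat{\sigma}^2_{C,p\ell}\geq \Delta\,\inf_{\bt} Y(\bt)^2\geq 2\sigma_0^2$, so the almost sure limit satisfies $\hat{\sigma}_\infty^2\geq 2\sigma_0^2$ with positive probability while $\mathrm{E}(\hat{\sigma}_\infty^2)=\sigma_0^2$, and therefore cannot be constant. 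Your route is instead a self-contained Gaussian-chaos argument: polarization over independent copies to force the bilinear form to vanish, rearrangement of that form as $\int\phi_{Y^{(1)}}(\bx)Y^{(2)}(\bx)\,d\bx$ (valid thanks to the $\gamma$-H\"older paths, $\gamma<\delta/2$, which make the inner integral absolutely convergent since $\delta-\alpha/2<d$), Parseval with the spectral density, positivity of $\widehat{C}$ at high frequencies from \eqref{eq:Fourier0} plus Paley--Wiener analyticity of $\widehat{\phi}_{Y^{(1)}}$ to conclude $\phi_{Y^{(1)}}\equiv 0$, and finally the contradiction with the exact identity $\mathrm{E}\{\phi_Y(\bx)Y(\bx)\}=2\sigma_0^2 f(\bx)\int g(\by-\bx)f(\by)\,d\by$. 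This is substantially longer and requires verifying several singular-integral interchanges, but it buys a structural explanation of why the limit is random (the spectral positivity of $C$ and the covariance identity), whereas the paper's argument is shorter and reuses the uniform lower bound already established. A small caveat common to both: one needs $\int\int g(\by-\bx)f(\bx)f(\by)\,d\bx\,d\by>0$ for the limit denominator and for your final positivity; you correctly trace this back to the standing assumption that the weights are not identically zero.
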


We remark that the WPMLE of $\sigma_0^2$ is thus an example of an estimator that is consistent under increasing-domain asymptotics and that is inconsistent and converges to a non-degenerate distribution under fixed-domain asymptotics. This type of behavior is also analyzed in \cite{ZhaZim2005}.

\begin{proof}[{\bf Proof of Theorem \ref{thm:inconsistency:theta:fixed}}]

We can show that
{\small
\begin{align*} 
	p\ell_{C,n}
	& =
	\ln(\sigma^2)
	\left(2 \sum_{i=1}^{n-1} \sum_{j=i+1}^n
	w_{i,j}
	\right)
	+	\sum_{i=1}^{n-1} \sum_{j=i+1}^n
	w_{i,j}
	\ln (1-C(\bx_j - \bx_j)^2)+
	\frac{1}{\sigma^2} 
	\sum_{i=1}^{n-1} \sum_{j=i+1}^n
	w_{i,j}
	\frac{Y(\bx_i)^2+Y(\bx_j)^2-2 C(\bx_j - \bx_j) Y(\bx_i)Y(\bx_j)}{1-C(\bx_j - \bx_j)^2} .
\end{align*}}
Hence, it follows that
	\begin{eqnarray} \label{eq:explicit:hat:sigma:square}
	\hat{\sigma}_{C,p\ell}^2
&= &
	\frac{
		1
	}{
		2 \sum_{i=1}^{n-1} \sum_{j=i+1}^n
		w_{i,j}
	}
	\sum_{i=1}^{n-1} \sum_{j=i+1}^n
	w_{i,j}
	\frac{Y(\bx_i)^2+Y(\bx_j)^2-2 C(\bx_j - \bx_j)Y(\bx_i)Y(\bx_j)}{1-C(\bx_j - \bx_j)^2}
 \\ 
	&= &
	\frac{
		1
	}{
		\sum_{i=1}^{n-1} \sum_{j=i+1}^n
		w_{i,j}
	}
	\sum_{i=1}^{n-1} \sum_{j=i+1}^n
	w_{i,j}
	\frac{\left\{ Y(\bx_i)  - Y(\bx_j) \right\}^2 }{2(1-C(\bx_i-\bx_j)^2)} 
	\notag 
	\\
	&  &  +
	\frac{
		1
	}{
		\sum_{i=1}^{n-1} \sum_{j=i+1}^n
		w_{i,j}
	}
	\sum_{i=1}^{n-1} \sum_{j=i+1}^n
	w_{i,j}
	\frac{2-2C(\bx_j - \bx_j) }{2(1-C(\bx_j - \bx_j)^2)}
	Y(\bx_i) Y(\bx_j). \notag
\end{eqnarray}

Let $r(\bt) = 1-C(\bt)$. Then $r(\bt) = o(1)$ as $\bt \to \bzero$. Hence
\begin{equation} \label{eq:DL:general:case}
	\Delta
	:=
	\rm inf_{\bt \in [-1,1]^d} 
	\frac{2-2C(\bt) }{2(1-C(\bt)^2)}
	= 
	\rm inf_{\bt \in [-1,1]^d} 
	\frac{2r(\bt) }{2( 2r(\bt) -r(\bt)^2) }
	>0,
\end{equation}
where we have used the continuity of $C$ and the fact that $C(\bt)<1$ for $\bt \in [-1,1]^d \backslash \{\bzero\}$.
Hence we have, if $\rm inf_{\bt \in [0,1]^d} Y(\bt) \geq 0$,
\begin{equation} \label{eq:for:inconsistency:lower:bound}
	\hat{\sigma}_{C,p\ell}^2
	\geq
	\Delta
	\rm inf_{\bt \in [0,1]^d} Y(\bt)^2 .
\end{equation}

From Lemma A.3 in \cite{lopera17finite}, we have
\[
\rm inf_{\bt \in [0,1]} Y(\bt)^2
\geq 
\frac{ 2\sigma_0^2 }{ \Delta} 
\]
with probability $\Pr>0$ where $\Pr$ does not depend on $n$. Hence, from \eqref{eq:for:inconsistency:lower:bound}, $\hat{\sigma}_{C,p\ell}^2$ does not converge to $\sigma_0^2$ in probability.
\end{proof}

\begin{proof}[{\bf Proof of Theorem \ref{thm:inconsistency:theta:fixed:2}}]
From \eqref{eq:explicit:hat:sigma:square}, we have
{\normalsize
\begin{align*}
	\hat{\sigma}_{C,p\ell}^2
	= &
	\frac{
		1
	}{
		\sum_{i,j=1}^n
		g( \bx_j - \bx_i )
	}
	\sum_{i,j=1}^n
	g( \bx_j - \bx_i )
	\frac{\left\{ Y(\bx_i)  - Y(\bx_j) \right\}^2 }{2(1-C(\bx_i-\bx_j)^2)} 
	\notag 
	\\
	&  +
	\frac{
		1
	}{
		\sum_{i,j=1}^n
		g( \bx_j - \bx_i )
	}
	\sum_{i,j=1}^n
	g( \bx_j - \bx_i )
	\frac{2-2C(\bx_i-\bx_j)}{2(1-C(\bx_i-\bx_j)^2)}
	Y(\bx_i) Y(\bx_j) \\
	= &
	\frac{
		1
	}{
		\frac{1}{n^2} \sum_{i,j=1}^n
		g( \bx_j - \bx_i )
	}
	\frac{1}{n^2}
	\sum_{i,j=1}^n
	F(\bx_i,\bx_j) +
	\frac{ 
		1
	}{
		\frac{1}{n^2}\sum_{i,j=1}^n
		g( \bx_j - \bx_i )
	}
	\frac{1}{n^2}
	\sum_{i,j=1}^n
	G(\bx_i,\bx_j),
\end{align*}}
by defining $F(\bx_i,\bx_j)$ and $G(\bx_i,\bx_j)$ appropriately.
In the above display, $F$ and $G$ are random functions from $[0,1]^d \times [0,1]^d$ to $\mathbb{R}$. From \eqref{eq:DL:general:case}, and since $Y$ has continuous realizations almost surely \cite{strait1966sample} and $g$ is continuous, $G$ is a bounded continuous function almost surely. Also, $F$ is continuous on $[-1,1]^d \backslash \{ \bzero \}$ almost surely. 
From \cite{strait1966sample}, for $\alpha < \delta$, almost surely, there exists $A < \infty$ for which, for any $\bx,\by \in [0,1]^d$, $ | Y(\bx) - Y(\by) | \leq A ||\bx - \by||^{\alpha/2}$. Hence, almost surely, $F( \bx,\by ) \leq B ||\bx - \by||^{ \alpha - \delta }$ with $B < \infty$. The function $(\bx,\by) \to \|\bx - \by\|^{ \alpha -  \delta}$ has a finite integral on $[-1,1]^d$ for $\alpha$ close enough to $\delta$.
Hence, almost surely,
\[
\int_{[0,1]^d}
\int_{[0,1]^d}
F(\bx,\by) f(\bx) f(\by) d\bx d\by
< \infty.
\]

Hence, for almost all the realization functions of $Y$ (with respect to the distribution of $Y$), we can use Theorem A in Section 5.4 of \cite{serfling2009approximation} to obtain that
\begin{align*}
	\hat{\sigma}_{C,p\ell}^2
	\to_{n \to \infty} 
	\frac{
		1
	}{
		\int_{[0,1]^d}
\int_{[0,1]^d}  g(\by-\bx) f(\bx) f(\by) d\bx d\by
	}
	\int_{[0,1]^d}
\int_{[0,1]^d}
	g(\by - \bx)
	\frac{\left\{ Y(\bx)  - Y(\by) \right\}^2 }{2(1-C(\bx - \by)^2)} 
	f(\bx) f(\by) d\bx d\by \\
	+
	\frac{
		1
	}{
		\int_{[0,1]^d}
\int_{[0,1]^d}  g(\by-\bx) f(\bx) f(\by) d\bx d\by
	}
	\int_{[0,1]^d}
\int_{[0,1]^d}	
	g(\by - \bx)
	\frac{2-2C(\bx-\by) }{2(1-C(\bx - \by)^2)}
	Y(\bx) Y(\by)
	f(\bx) f(\by) d\bx d\by. 
\end{align*}
The above limit is a random variable that we write $\hat{\sigma}_{\infty}^2$ and that is almost surely finite (the integrals are defined in the $L^2$ sense, see \cite{Ste1999}). One can show that the mean value of $\hat{\sigma}_{\infty}^2$ is $\sigma_0^2$. Finally, the variance of $\hat{\sigma}_{\infty}^2$ is non-zero because of Theorem \ref{thm:inconsistency:theta:fixed}.
\end{proof}


\bibliographystyle{abbrv}
\bibliography{clasymf}


\end{document}